\newcommand{\Gal}{\operatorname{Gal}}
\newcommand{\Res}{\operatorname{Res}}
\newcommand{\discr}{\operatorname{discr}}
\newcommand{\Ram}{\operatorname{Ram}}
\newcommand{\Irr}{\operatorname{Irr}}
\newcommand{\Tr}{\operatorname{Tr}}
\newcommand{\Nr}{\operatorname{Nr}}
\newcommand{\GL}{\operatorname{GL}}
\newcommand{\id}{\operatorname{id}}
\newcommand{\Th}{\operatorname{th}}
\newcommand{\F}{\mathcal{F}}
\newcommand{\OO}{\mathcal{O}}
\newcommand{\QQ}{{\mathbb Q}}
\newcommand{\ZZ}{{\mathbb Z}}
\newcommand{\NN}{{\mathbb N}}
\newcommand{\HH}{{\mathbb H}}
\newcommand{\eee}{\hfill$\Box$}
\newtheorem{theorem}{Theorem}[section]
\newtheorem{lemma}[theorem]{Lemma}
\newtheorem{corollary}[theorem]{Corollary}
\newtheorem{remark}[theorem]{Remark}
\newtheorem{example}[theorem]{Example}
\begin{document}
\title{Explicit Constructions of the non-Abelian
$\mathbf{p^3}$-Extensions Over $\mathbf{\QQ}$}
\author{Oz Ben-Shimol}
\subjclass{12F12; 11R18}
\keywords{Constructive Galois Theory, Heisenberg group, Explicit Embedding problem, Minimal Ramification}
\maketitle
%
%
\begin{abstract}
Let $p$ be an odd prime.
Let $F/k$ be a cyclic extension of degree $p$ and of characteristic different from $p$.
The explicit constructions of the non-abelian $p^{3}$-extensions over $k$, are induced by certain elements in ${F(\mu_{p})}^{*}$.
In this paper we let $k=\QQ$ and present sufficient conditions for these elements to be suitable for the constructions.
Polynomials for the non-abelian groups of order $27$ over $\QQ$ are constructed.
\\ \indent
We describe explicit realizations of those groups with exactly two ramified primes, without consider Scholz conditions.
\end{abstract}
\maketitle
%
%
\section{Introduction}
Let $p$ be an odd prime.
Let $F/\QQ$ be a $C_{p}$-extension, $K=\QQ(\zeta_{p})$, $L=F(\zeta_{p})=FK$, where $\zeta_{p}$ is a fixed $p^{\Th}$ root of unity.
Let $\bar{\sigma}$ be a generator for $\Gal(L/K)$. The restriction $\sigma=\bar{\sigma}|_{F}$ generates $\Gal(F/\QQ)\cong\Gal(L/K)$.
Similarly, let $\bar{\tau}$ be a generator for $\Gal(L/F)$.
The restriction $\tau=\bar{\tau}|_{K}$ generates $\Gal(K/\QQ)\cong\Gal(L/F)$.
$L/\QQ$ is cyclic of order $p(p-1)$, generated by $\bar{\sigma}\bar{\tau}$.
\\
\indent
Let $e$ be a primitive root modulo $p$.
We have the multiplicative homomorphisms [4, p.130, Corollary 8.1.5]
$$
    \left\{
    \begin{array}{l}
        \Phi=\Phi_{L/F}:L^{*}\to L^{*} \\
        \Phi_{L/F}(x)=x^{e^{p-2}}\bar{\tau}x^{e^{p-3}}\cdots\bar{\tau}^{p-2}x, \ \ \ x\in L^{*},
    \end{array}
    \right.
$$
and its restriction $\Phi_{K/\QQ}:K^{*}\to K^{*}$. Clearly, $\Phi$ commutes with the norm:
$$
\Nr_{L/K}\Phi_{L/F}=\Phi_{K/\QQ}\Nr_{L/K}.
$$
\indent Let $\HH_{p^{3}}$ and $C_{p^{2}}\rtimes C_{p}$ be the non-abelian groups of order $p^3$.
From the generic construction of the non-abelian $p^{3}$-extensions (over fields with characteristic different from $p$), it follows that the elements $x\in L^{*}$ with $\Phi(\Nr_{L/K}(x))\not\in {L^{*}}^p$ induce $\HH_{p^{3}}$-extensions over $\QQ$, while the elements $x\in L^{*}$ with $\Phi(\zeta_{p}\Nr_{L/K}(x))\not\in{L^{*}}^p$ induce $C_{p^{2}}\rtimes C_{p}$-extensions over $\QQ$, cf. Theorem 4.1 and Theorem 4.2.
\\
\indent
Arne Ledet, in [3],[4], and [5, pages 130-132], constructs explicit extensions and the corresponding polynomials over $\QQ$ for these groups, when $p=3$ or $p=5$, and $L=\QQ(\zeta_{p^2})$.
He provides a sufficient condition for an element $x\in\OO_{L}$ to satisfy the above requirements, namely:
$\Nr_{L/\QQ}(x)=q$ is an unramified prime in $L$ (that is, $q\neq 3$, $q\neq 5$, respectively).
His argument for the sufficiency relies on unique factorization in $\OO_{K}$.
\\
\indent
Our goal is to generalize this idea for every $p$ and for every $C_{p}$-extension $F/\QQ$ the construction starts with (in [4], the construction starts with a $C_3$-extension inside $\QQ_{9}$).
More precisely, given a $C_{p}$-extension $F/\QQ$, we provide sufficient conditions for an element $x\in L^{*}$ to satisfy $\Phi(\Nr_{L/K}(x))\not\in{L^{*}}^p$ for $\HH_{p^{3}}$, and $\Phi(\zeta_{p}\Nr_{L/K}(x))\not\in{L^{*}}^p$ for $C_{p^{2}}\rtimes C_{p}$.
In section 4 we explain the explicit constructions for those two groups, and show that there are infinitely many elements $x\in L^{*}$, suitable for both constructions.
In section 5, we exhibit other polynomials for $\HH_{27}$ and $C_{9}\rtimes C_{3}$ over~$\QQ$.
\\
\indent
The aim of Chapter 6 is to construct non-abelian $p^3$-extensions with minimal number of ramified primes. 
For different approaches we refer the reader to [8] and [12]. 
%
%
\section{Notation and Definitions}
\noindent
$\bullet$ \ The number ring of a given number field $N$ is denoted by $\OO_N$.
\vskip 0.1cm \noindent
$\bullet$ \ If $E$ is a finite group and $N/k$ is a Galois extension with Galois group $\Gal(N/k)\cong E$, we then say that $N/K$ is an $E$-extension, or that $N$ is an $E$-extension over $k$.
Any polynomial $f$ over $k$ with splitting field $N$ is called $E$-polynomial over $k$.
\vskip 0.1cm \noindent
$\bullet$ \ A cyclic group of order $m$ will denoted by $C_m$.
\vskip 0.1cm \noindent
$\bullet$ \ As in the introduction:
$F/\QQ$ is a $C_p$-extension, $K=\QQ(\zeta_{p})$ and $L=FK$ -- their compositum.
Let $\bar{\sigma}$ be a fixed generator for $\Gal(L/K)$. Thus $\sigma=\bar{\sigma}|_{F}$ generates $\Gal(F/\QQ)\cong\Gal(L/K)$.
Let $\bar{\tau}$ be a fixed generator for $\Gal(L/F)$. Thus $\tau=\bar{\tau}|_{K}$ generates $\Gal(K/\QQ)\cong\Gal(L/F)$.
\vskip 0.1cm \noindent
$\bullet$ \ If $e$ is a primitive root modulo $p$, then we have the homomorphism $\Phi$ defined in the introduction.
\vskip 0.1cm \noindent
$\bullet$ \ Let $\F_{L},\F_{K}$ be the multiplicative groups consisting of the fractional ideals of $L$,$K$, respectively.
We have analogous homomorphisms:
$$
    \left\{
    \begin{array}{l}
        \widetilde{\Phi}_{L/F}:\F_{L}\to \F_{L} \\
        \widetilde{\Phi}_{L/F}(I)=I^{e^{p-2}}\bar{\tau}I^{e^{p-3}}\cdots\bar{\tau}^{p-2}I, \ \ \ I\in\F_{L},
    \end{array}
    \right.
$$
and its restriction
$$
    \left\{
    \begin{array}{l}
        \widetilde{\Phi}=\widetilde{\Phi}_{K/\QQ}:\F_{K}\to \F_{K} \\
        \widetilde{\Phi}_{K/\QQ}(I)=\widetilde{\Phi}_{L/F}(\OO_{L}I)\cap K, \ \ \ I\in\F_{K}.
    \end{array}
    \right.
$$
Clearly, $\Nr_{L/K}\widetilde{\Phi}_{L/F}=\widetilde{\Phi}_{K/\QQ}\Nr_{L/K}$.
[remark: let $I\in\F_{L}$, then $\Nr_{L/K}(I)=\OO_{K}\cap
I\bar{\sigma}I\cdots\bar{\sigma}^{p-1}I$ (see Marcus [6] or Ribenboim [9])].
\vskip 0.1cm \noindent
$\bullet$ \ Let $x\in L^{*}$ with $\Nr_{L/\QQ}(x)=\pm q_{1}^{l_1}\cdots q_{n}^{l_n}$, \ $n>0$, \ $l_i\in\ZZ-\{0\}$, and the $q_i$'s are distinct rational primes.
Then
$$
\OO_{L}x=J_{1}(x)\cdots J_{n}(x)J', \ \ \ \ J_i(x),J'\in\F_{L}
$$
such that \\
(a) \  $\Nr_{L/\QQ}(J_{i}(x))=\ZZ q_{i}^{l_{i}}$. \\
(b) \ Any prime factor of $J_{i}(x)$ in $L$ divides $q_i$. \\
In the sequel it can be assumed that $J'=\OO_{L}$.
\vskip 0.1cm \noindent
$\bullet$ \ For each $i$ we set $I_{i}(x):=\Nr_{L/K}(J_i(x))$. Clearly, $\Nr_{K/\QQ}(I_{i}(x))=\ZZ q_{i}^{l_{i}}$.
\vskip 0.1cm \noindent
$\bullet$ \ If for some $i$, $q_{i}$ splits completely in $K$ and $I_{i}(x)=P_{1}^{\beta_{1}}P_{2}^{\beta_{2}}\cdots P_{p-1}^{\beta_{p-1}}$, where the $P_{j}$'s are the distinct prime ideals in $K$ lying above $q_{i}$, then
we define $\chi(I_{i}(x))$ to be the integer
$$
    \chi(I_{i}(x))=e^{p-2}\beta_{1}+e^{p-3}\beta_{p-1}+\ldots e\beta_{3}+\beta_{2}.
$$
%
%
\section{The Main Results}
%
%
%
\begin{theorem}\label{criteria}
Let $x\in L^{*}$, \ $\Nr_{L/\QQ}(x)=\pm q_{1}^{l_1}\cdots q_{n}^{l_n}$, \ $n\geq 0$, \ $l_i\in\ZZ-\{0\}$, \ $q_i$ a prime.
Then $\Phi(\Nr_{L/K}(x))$ does not generate a $p^{\Th}$ power (fractional) ideal of $L$,
if and only if $n>0$ and there exists an $i$ such that \\
\textbf{(a)} \ $q_{i}$ splits completely in $L$. \\
\textbf{(b)} \ $p$ does not divide $\chi(I_{i}(x))$.
\end{theorem}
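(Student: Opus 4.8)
The plan is to localize the statement at each rational prime $q_i$ and to identify exactly when the corresponding local factor fails to be a $p^{\Th}$ power. First I would pass from elements to the ideals they generate. Since $\Phi=\Phi_{L/F}$ is given on $L^{*}$ by $\Phi(w)=\prod_{m=0}^{p-2}\bar{\tau}^{m}(w)^{e^{p-2-m}}$, taking generated ideals turns $\Phi$ into $\widetilde{\Phi}_{L/F}$, so that $\OO_{L}\Phi(\Nr_{L/K}(x))=\widetilde{\Phi}_{L/F}(\OO_{L}\Nr_{L/K}(x))$. Using $\OO_{K}\Nr_{L/K}(x)=\Nr_{L/K}(\OO_{L}x)=I_{1}(x)\cdots I_{n}(x)$ and the fact that $\widetilde{\Phi}_{L/F}$ is a homomorphism of $\F_{L}$, this equals $\prod_{i=1}^{n}\widetilde{\Phi}_{L/F}(\OO_{L}I_{i}(x))$. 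Each $I_{i}(x)$ is supported on the primes of $K$ above $q_{i}$, and both $\bar{\tau}$ and extension to $\OO_{L}$ preserve the set of primes above a fixed $q_{i}$; hence the $n$ factors are supported on pairwise disjoint sets of primes of $L$. By unique factorization of ideals the whole product is a $p^{\Th}$ power if and only if every factor $\widetilde{\Phi}_{L/F}(\OO_{L}I_{i}(x))$ is. This reduces the theorem to the single-prime claim that $\widetilde{\Phi}_{L/F}(\OO_{L}I_{i}(x))$ fails to be a $p^{\Th}$ power exactly when \textbf{(a)} and \textbf{(b)} both hold; the case $n=0$ is degenerate, since under the standing convention $J'=\OO_{L}$ the element $x$ is a unit and $\Phi(\Nr_{L/K}(x))$ generates $\OO_{L}=\OO_{L}^{p}$, which accounts for the clause $n>0$.

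Next, fix $q=q_{i}$, write $I_{i}(x)=\prod_{Q\mid q}Q^{\beta_{Q}}$ over the primes $Q$ of $K$, and use $\bar{\tau}^{m}(\OO_{L}I_{i}(x))=\OO_{L}\tau^{m}(I_{i}(x))$ to obtain $\widetilde{\Phi}_{L/F}(\OO_{L}I_{i}(x))=\OO_{L}\prod_{Q}Q^{\delta_{Q}}$ with $\delta_{Q}=\sum_{m=0}^{p-2}e^{p-2-m}\beta_{\tau^{-m}Q}$. The core computation is to reduce $\delta_{Q}$ modulo $p$. Writing $f$ for the residue degree and $g$ for the number of primes of $K$ above $q$ (so $fg=p-1$ when $q\neq p$ is unramified), the terms attached to a fixed $\beta_{P}$ assemble into a geometric sum $\sum_{j=0}^{f-1}(e^{-g})^{j}\pmod p$; because $e$ is a primitive root, $e^{-g}$ has exact order $f$, so this sum vanishes modulo $p$ as soon as $f>1$. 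Hence for every $q\not\equiv 1\pmod p$ one gets $\delta_{Q}\equiv 0\pmod p$ for all $Q$, the $K$-ideal $\prod_{Q}Q^{\delta_{Q}}$ and its extension to $L$ are $p^{\Th}$ powers, and such $q$ never contribute. When $f=1$, i.e. $q\equiv 1\pmod p$ splits completely in $K$, there is no cancellation, and a short rearrangement gives $\delta_{Q}\equiv e^{-b-1}\chi(I_{i}(x))\pmod p$ for a suitable index $b=b(Q)$, the factor $e^{-b-1}$ being a unit mod $p$. Here I must check that the prime labeling matches the definition of $\chi$, so that $p\mid\chi(I_{i}(x))$ is equivalent to $\delta_{Q}\equiv 0\pmod p$ for all $Q$.

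It remains to convert these congruences into a $p^{\Th}$-power statement in $\F_{L}$, which forces a record of how $q$ behaves in $L/K$. For $q\equiv 1\pmod p$ the primes $Q$ are unramified of degree one, and since $L/K$ is a $C_{p}$-extension each $Q$ either splits completely in $L$ (precisely when $q$ splits completely in $F$, equivalently in $L$) or is inert. In the split case every $\OO_{L}Q$ is a product of distinct primes with $e=1$, so $\widetilde{\Phi}_{L/F}(\OO_{L}I_{i}(x))$ is a $p^{\Th}$ power if and only if every $\delta_{Q}\equiv 0\pmod p$, i.e. if and only if $p\mid\chi(I_{i}(x))$; this is exactly \textbf{(a)}$\wedge$\textbf{(b)}. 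In the inert case the relation $I_{i}(x)=\Nr_{L/K}(J_{i}(x))$ forces $p\mid\beta_{Q}$ for every $Q$ (the local degree is $p$), hence $p\mid\chi(I_{i}(x))$ and the factor is automatically a $p^{\Th}$ power, while \textbf{(a)} correctly fails. The ramified primes are handled the same way: if $q\neq p$ ramifies in $F$ then each $Q$ is totally ramified in $L/K$, the exponents in $\widetilde{\Phi}_{L/F}(\OO_{L}I_{i}(x))$ acquire a factor $e(\mathfrak{p}/Q)=p$, and the factor is a $p^{\Th}$ power; and for $q=p$ there is a unique prime $Q$ fixed by $\tau$, whence $\delta_{Q}=\beta_{Q}\sum_{k=0}^{p-2}e^{k}\equiv 0\pmod p$. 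In every such situation \textbf{(a)} fails and the factor is a $p^{\Th}$ power, so reassembling the factors yields the theorem.

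I anticipate that the main obstacle is not the clean geometric-series cancellation but the exhaustive local case analysis of the last paragraph: for each decomposition/ramification type of $q$ in the tower $\QQ\subset K,F\subset L$ one must verify that the divisibilities coming from $I_{i}(x)=\Nr_{L/K}(J_{i}(x))$ and from the ramification indices really do force a $p^{\Th}$ power whenever \textbf{(a)} fails, so that the only surviving obstruction is the completely split case governed by $\chi$. Keeping the prime labeling in $\chi$ consistent with the exponent $\delta_{Q}$ throughout is the other delicate bookkeeping point.
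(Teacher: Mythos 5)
Your proposal is correct and follows essentially the same route as the paper's proof: the same reduction to the pairwise-disjointly supported factors $\widetilde{\Phi}(I_{i}(x))$, the same geometric-sum cancellation $\sum_{j=0}^{f-1}(e^{-g})^{j}\equiv 0\pmod p$ for primes not splitting completely in $K$ (the paper's $S_{g}(i)$), the same ramified/inert dichotomy when $q$ splits completely in $K$ but not in $L$, and the same observation that in the completely split case the exponents $\delta_{Q}$ are unit multiples of $\chi(I_{i}(x))$, so that divisibility by $p$ of one is equivalent to that of all. The only cosmetic deviations are that you treat $q=p$ as a separate sub-case (the paper absorbs it into the computation for primes not splitting completely in $K$, since $p$ is totally ramified there with $g=1$) and that you settle the inert case via $\Nr_{L/K}(\mathfrak{Q})=Q^{p}$ forcing $p\mid\beta_{Q}$, where the paper instead notes $\bar{\sigma}J_{i}(x)=J_{i}(x)$ and hence $\OO_{L}I_{i}(x)=J_{i}(x)^{p}$.
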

%
%
%
\begin{proof}
We assume $n>0$. Denote $\gamma=\Nr_{L/K}(x)$.
Clearly, $\OO_{K}\gamma=I_{1}(x)\cdots I_{n}(x)$. \
No prime factor of $I_{i}(x)$ is conjugate to any prime factor of $I_j(x)$ \ $(i\neq j)$.
It follows that $\OO_{L}\Phi(\gamma)\in\F_{L}^{p}$ \ if and only if $\OO_{L}\widetilde{\Phi}(I_{i}(x))\in\F_{L}^p$ for every $i=1,\ldots,n$.
\\
\indent
We fix some $i$ and set $q=q_i$, $l=l_i$, $J=J_i(x)$, $I=I_i(x)$. Thus, $J$ is a fractional ideal of $L$, $I=\Nr_{L/K}(J)$, and \ $\Nr_{L/\QQ}(J)=\Nr_{K/\QQ}(I)=\ZZ q^{l}$.
\vskip 0.1cm
We separate the rest of the proof into two cases.
\vskip 0.1cm\noindent
\textbf{Case 1: q does not split completely in L}
\\ \indent
Let $g$ be the decomposition number of $q$ in $K/\QQ$.
Clearly, $\tau^{g}I=I$.
Now,
$$
\widetilde{\Phi}(I)=\prod_{j=0}^{p-2}\tau^{j}I^{e^{p-(j+2)}}=
\prod_{j=1}^{\frac{p-1}{g}}\prod_{i=0}^{g-1}\tau^{i}I^{e^{p-((j-1)g+i+2)}}=
\prod_{i=0}^{g-1}\tau^{i}I^{S_{g}(i)},
$$
where,
$$
    S_g(i)=\sum_{j=1}^{\frac{p-1}{g}}e^{p-((j-1)g+i+2)}
    =e^{p-(i+2)}\cdot\frac{e^{-(p-1)}-1}{e^{-g}-1}.
$$
We see that if $q$ does not split completely in $K$ then $S_{g}(i)\equiv 0\mod p$ for all $i=0,\ldots g-1$.
Therefore, if $q$ does not split completely in $K$ then $\widetilde{\Phi}(I)\in\F_{K}^p$.
\\ \indent
Suppose that $q$ splits completely in $K$.
If $q$ is ramified in $F$ then $q\equiv 1\mod p$ and any prime ideal $P$ in $K$ lying above $q$ is then totally ramified in $L$, equivalently, $\OO_{L}P\in\F_{L}^{p}$, hence $\OO_{L}I\in\F_{L}^p$.
\\ \indent
Finally, suppose that $q$ is inert in $F$.
Then any prime in $K$ lying above $q$ is inert in $L$.
Hence, $\bar{\sigma}J=J$, thus $\OO_{L}I=J^{p}\in\F_{L}^{p}$.
\\ \indent
We conclude that if $q$ does not split completely in $L$, then $\OO_{L}\widetilde{\Phi}(I)\in\F_{L}^{p}$.
\vskip 0.1cm \noindent
\textbf{Case 2: q splits completely in L}
\\ \indent
Suppose that $q$ splits completely in $L$, and let $\OO_{K}q=P_{1}\ldots P_{p-1}$ be the prime decomposition of $q$ in $K$.
$\tau$, as a permutation acting on the $(p-1)$-set consisting of the $P_{j}$'s, is the $(p-1)$-cycle $(P_1,\ldots,P_{p-1})$ \ (say).
We shall denote by $\check{\tau}$ the corresponding $(p-1)$-cycle in the symmetric group; $\check{\tau}=(1,\ldots,p-1)$.
Now, if
$$
    I=P_{1}^{\beta_{1}}P_{2}^{\beta_{2}}\cdots P_{p-1}^{\beta_{p-1}},
$$
then
\begin{equation}
    \widetilde{\Phi}(I)=\prod_{j=1}^{p-1}\Phi(P_{j})^{\beta_j}=\prod_{j=1}^{p-1}\prod_{k=0}^{p-2}\bar{\tau}^{k}P_{j}^{e^{(p-(k+2))}\beta_{j}}
    =\prod_{j=1}^{p-1}P_{j}^{\chi_{j}(I)},
\end{equation}
where,
$$
    \chi_{j}(I)=\sum_{k=0}^{p-2}e^{(p-(k+2))}\beta_{\check{\tau}^{-k}(j)}, \ \ \ j=1,\ldots,p-1.
$$
Clearly, the primes $P_{j}$ split completely in L, hence, $\OO_{L}\widetilde{\Phi}(I)\in\F_{L}^{p}$ if and only if $\widetilde{\Phi}(I)\in\F_{K}^{p}$.
Also, $\widetilde{\Phi}(I)\in\F_{K}^{p}$ if and only if $\chi_{j}(I)\equiv 0\mod p$ \ for all $j=1,\ldots,p-1$.
Note that $\chi_{j+1}(I)\equiv e\cdot \chi_{j}(I)\mod p$ \ for all $j$. Therefore, if $p$ divides some $\chi_{j}(I)$, then $p$ must divides $\chi_{1}(I),\ldots,\chi_{p-1}(I)$ .
Since we consider the $\chi_{j}(I)$'s modulo $p$, we take $j=1$ and write $\chi(I)$ instead of $\chi_{1}(I)$.
We conclude that $\OO_{L}\widetilde{\Phi}(I)\in\F_{L}^{p}$ if and only if $p$ divides $\chi(I)$, as required.
\\ \indent
If $n=0$ then the assertion is clear.
\end{proof}
%
%
\begin{corollary}
Let $x\in L^{*}$, \ $\Nr_{L/\QQ}(x)=\pm q_{1}^{l_1}\cdots q_{n}^{l_n}$, \ $n\geq 1$, \ $l_i\in\ZZ-\{0\}$, \ $q_i$ a prime.
Suppose that there exists an $i$ such that $q_{i}$ splits completely in $L$ and $p$ does not divide $\chi(I_{i}(x))$.
Then $\Phi(\Nr_{L/K}(x)), \Phi(\zeta_{p}\Nr_{L/K}(x))\not\in {L^{*}}^p$. \eee
\end{corollary}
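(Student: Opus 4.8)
The plan is to deduce this directly from Theorem~\ref{criteria} via the elementary, one-directional observation that a $p^{\Th}$ power element generates a $p^{\Th}$ power ideal: if $\alpha\in{L^{*}}^p$, say $\alpha=\beta^p$ with $\beta\in L^{*}$, then $\OO_{L}\alpha=(\OO_{L}\beta)^p\in\F_{L}^p$. Reading this contrapositively, whenever $\OO_{L}\alpha\notin\F_{L}^p$ we may conclude $\alpha\notin{L^{*}}^p$. So the corollary will follow once we produce, at the level of ideals, the failure of $p$-divisibility and transfer it to each of the two elements $\Phi(\Nr_{L/K}(x))$ and $\Phi(\zeta_{p}\Nr_{L/K}(x))$.

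First I would dispose of $\Phi(\Nr_{L/K}(x))$. The hypotheses of the corollary are exactly conditions \textbf{(a)} and \textbf{(b)} of Theorem~\ref{criteria}, together with $n\geq 1>0$. Thus the theorem applies verbatim and yields $\OO_{L}\Phi(\Nr_{L/K}(x))\notin\F_{L}^p$; by the observation above, $\Phi(\Nr_{L/K}(x))\notin{L^{*}}^p$. For the second membership I would exploit the multiplicativity of $\Phi$ together with the fact that $\zeta_{p}$ is a unit. Since $\Phi$ is a homomorphism, $\Phi(\zeta_{p}\Nr_{L/K}(x))=\Phi(\zeta_{p})\,\Phi(\Nr_{L/K}(x))$, and
$$
\Phi(\zeta_{p})=\zeta_{p}^{e^{p-2}}\,\bar{\tau}\zeta_{p}^{e^{p-3}}\cdots\bar{\tau}^{p-2}\zeta_{p}
$$
is a product of powers of conjugates of $\zeta_{p}$, each of which is again a $p^{\Th}$ root of unity and hence a unit of $\OO_{L}$. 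Therefore $\Phi(\zeta_{p})$ is a unit, $\OO_{L}\Phi(\zeta_{p})=\OO_{L}$, and consequently
$$
\OO_{L}\Phi(\zeta_{p}\Nr_{L/K}(x))=\OO_{L}\Phi(\zeta_{p})\cdot\OO_{L}\Phi(\Nr_{L/K}(x))=\OO_{L}\Phi(\Nr_{L/K}(x))\notin\F_{L}^p,
$$
so again $\Phi(\zeta_{p}\Nr_{L/K}(x))\notin{L^{*}}^p$.

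I expect essentially no obstacle here: the entire arithmetic content—the interplay between the decomposition behaviour of $q_i$ in $L$ and the $p$-divisibility of $\chi(I_i(x))$—has already been carried out in the proof of Theorem~\ref{criteria}. The only point deserving a word of care is that the passage from ideals to elements runs in one direction only (a $p^{\Th}$ power ideal need not arise from a $p^{\Th}$ power element), but we invoke solely the easy direction. The presence of the extra factor $\zeta_{p}$ is harmless precisely because it, and therefore its image under $\Phi$, is a unit, so it does not perturb the generated ideal.
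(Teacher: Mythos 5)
Your proposal is correct and coincides with the paper's intended argument: the paper states this corollary without proof (marking it as immediate from Theorem~\ref{criteria}), relying exactly on the easy direction that a $p^{\Th}$ power element generates a $p^{\Th}$ power ideal, together with the fact that $\Phi(\zeta_{p})$ is a root of unity (the paper later records $\Phi(\zeta_{p})=\zeta_{p}^{-e^{p-2}}$ in the proof of Theorem 4.3), so that the extra factor does not change the generated ideal. Your one flagged subtlety --- that the ideal-to-element implication runs in only one direction --- is exactly the right point of care, and it is also why the paper calls Theorem~\ref{criteria} an \emph{ideal-theoretic} criterion that can fail to detect non-powers (cf.\ Example 3.7).
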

Let $x\in L^{*}$ which satisfies condition (a) of Theorem \ref{criteria}. Let $q_{i}$ be a prime which splits completely in $L$ with $\Nr_{K/\QQ}(I_{i}(x))=\ZZ q_{i}^{l_{i}}$.
If $P$ is a prime ideal in $K$ lying above $q_{i}$ then
$\Nr_{K/\QQ}(P)=\ZZ q_{i}$ (since $q_{i}$ splits completely in $K$). Therefore, if $I_{i}(x)=P_{1}^{\beta_{1}}P_{2}^{\beta_{2}}\cdots P_{p-1}^{\beta_{p-1}}$,
where the $P_{j}$'s are the distinct prime ideals in $K$ lying above $q_{i}$, then
$$
    \ZZ q_{i}^{l_{i}}=\Nr_{K/\QQ}(I_{i}(x))=
    \prod_{j=1}^{p-1}\Nr_{K/\QQ}P_{j}^{\beta_{j}}=\ZZ q_{i}^{\beta_{1}+\ldots\beta_{p-1}}.
$$
Thus,
\begin{equation}
\beta_{1}+\ldots+\beta_{p-1}=l_{i}.
\end{equation}
It follows immediately that if $I_{i}(x)$ is of the form $I_{i}(x)=P^{l}$ (for some prime ideal $P$ in $K$ lying above $q_{i}$ and for some $l\in\ZZ-\{0\})$, then $p$ divides $\chi(I_{i}(x))$ if and only if $p$ divides $l$.
We have
%
%
%
\begin{corollary}\label{suffP}
Let $x\in L^{*}$, \ $\Nr_{L/\QQ}(x)=\pm q_{1}^{l_1}\cdots q_{n}^{l_n}$, \ $n\geq 1$, \ $l_i\in\ZZ-\{0\}$, \ $q_i$ a prime.
Suppose that there exists an $i$ such that $q_{i}$ splits completely in $L$ and $I_{i}(x)=P^{l_{i}}$,
where $P$ is a prime ideal of $\OO_{K}$, $p$ does not divide $l_{i}$.
Then $\Phi(\Nr_{L/K}(x)), \Phi(\zeta_{p}\Nr_{L/K}(x))\not\in {L^{*}}^p$. \eee
\end{corollary}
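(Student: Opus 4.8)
The plan is to deduce the statement directly from the preceding Corollary, since its conclusion is exactly the one we want. It therefore suffices to check that the two hypotheses of that Corollary hold at the given index $i$: that $q_i$ splits completely in $L$ (which is assumed outright) and that $p \nmid \chi(I_i(x))$. So the whole argument reduces to computing $\chi(I_i(x))$ modulo $p$ in the special case where $I_i(x)$ is a prime power.

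For that second hypothesis I would simply unwind the definition of $\chi$. Writing $I_i(x) = P_1^{\beta_1}\cdots P_{p-1}^{\beta_{p-1}}$ in terms of the primes of $K$ lying above $q_i$, the assumption $I_i(x) = P^{l_i}$ forces exactly one exponent, say $\beta_m$, to equal $l_i$, while all the others vanish. Feeding this into $\chi(I_i(x)) = e^{p-2}\beta_1 + e^{p-3}\beta_{p-1} + \cdots + \beta_2$, only the single term attached to $\beta_m$ survives, so $\chi(I_i(x)) = e^{s} l_i$ for the appropriate power $e^{s}$ (one of $e^{p-2},\ldots,e,1$ according to which prime $P$ is).

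Since $e$ is a primitive root modulo $p$, the factor $e^{s}$ is invertible modulo $p$, whence $p \mid \chi(I_i(x))$ if and only if $p \mid l_i$ -- which is precisely the observation recorded just before the statement. The hypothesis $p \nmid l_i$ then yields $p \nmid \chi(I_i(x))$, so both hypotheses of the preceding Corollary are met at the index $i$, and its conclusion $\Phi(\Nr_{L/K}(x)), \Phi(\zeta_p \Nr_{L/K}(x)) \notin {L^*}^p$ follows. I do not expect a genuine obstacle here, as all the substance already sits in Theorem~\ref{criteria} and in the previous Corollary; the only points needing care are the bookkeeping that $I_i(x)$ is genuinely supported on a single prime above $q_i$, and the remark that every coefficient appearing in the definition of $\chi$ is a power of the primitive root $e$ and hence a unit modulo $p$.
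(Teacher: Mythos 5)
Your proposal is correct and is essentially identical to the paper's own justification: the paper also reduces the statement to the preceding Corollary, observing in the paragraph before the statement (together with equation (2)) that when $I_{i}(x)=P^{l_{i}}$ exactly one exponent $\beta_{m}=l_{i}$ is nonzero, so that $\chi(I_{i}(x))$ collapses to a single term $e^{s}l_{i}$ whose coefficient is a power of the primitive root $e$ and hence a unit modulo $p$, giving $p\mid\chi(I_{i}(x))$ if and only if $p\mid l_{i}$. Your bookkeeping points (single-prime support of $I_{i}(x)$ and invertibility of the coefficients in the definition of $\chi$) are exactly the content of the paper's remark, so there is nothing missing.
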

Let $\mathcal{H}(L)$ be the Hilbert class field over $L$ (see [6, Chapter 8]), and let $q$ be a rational prime which splits completely in $\mathcal{H}(L)$.
Then any prime ideal of $L$ lying above $q$ is principal in $L$.
%
%
\begin{corollary}\label{inf}
There are infinitely many elements $x\in L^{*}$ such that \\
1. \ $\Phi(\Nr_{L/K}(x))\not\in {L^{*}}^p$, and \\
2. \ $\Phi(\zeta_{p}\Nr_{L/K}(x))\not\in {L^{*}}^p$.
\end{corollary}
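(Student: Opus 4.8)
The plan is to realize the infinitely many required elements as generators of principal prime ideals of $L$ lying above rational primes that split completely in the Hilbert class field $\mathcal{H}(L)$, and then to verify that each such generator meets the hypotheses of Corollary \ref{suffP}. This reduces the whole statement to two ingredients: the existence of infinitely many completely split primes, and the degree-one norm bookkeeping from Section 2.

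First I would produce infinitely many rational primes $q$ that split completely in $\mathcal{H}(L)$. Since $F/\QQ$ and $K/\QQ$ are both Galois, so is their compositum $L/\QQ$; because the Hilbert class field is canonical, $\mathcal{H}(L)/\QQ$ is Galois as well. Chebotarev's density theorem then guarantees a positive density, in particular infinitely many, of rational primes $q$ that split completely in $\mathcal{H}(L)$. Any such $q$ splits completely in $L$ (as $L\subseteq\mathcal{H}(L)$), and a fortiori in $K$.

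Next I would fix such a $q$ and choose a prime ideal $\mathfrak{P}$ of $L$ above $q$. By the remark preceding the corollary, $\mathfrak{P}$ is principal, say $\mathfrak{P}=\OO_{L}x$ for some $x\in L^{*}$. Since $q$ splits completely in $L$, the prime $\mathfrak{P}$ has residue degree one, so $\Nr_{L/\QQ}(x)=\pm q$; thus in the notation of Section 2 one has $n=1$, $q_{1}=q$, $l_{1}=1$, and $J_{1}(x)=\mathfrak{P}$. Moreover $I_{1}(x)=\Nr_{L/K}(\mathfrak{P})=P$, where $P$ is the prime of $K$ below $\mathfrak{P}$, so $I_{1}(x)=P^{1}$. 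Because $p$ is an odd prime, $p\nmid 1=l_{1}$, and hence the hypotheses of Corollary \ref{suffP} are satisfied: both $\Phi(\Nr_{L/K}(x))$ and $\Phi(\zeta_{p}\Nr_{L/K}(x))$ lie outside ${L^{*}}^{p}$.

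Finally I would deduce infinitude: distinct split primes $q$ yield elements $x$ with distinct rational norms $\pm q$, so the construction produces infinitely many pairwise distinct elements $x\in L^{*}$ satisfying both conditions. The only nontrivial input is the existence of infinitely many primes splitting completely in $\mathcal{H}(L)$, which is exactly where Chebotarev, together with the principality supplied by the Hilbert class field, does the work; everything else reduces to the degree-one norm computation already set up in Section 2. I expect this density/principality step to be the main point, the remaining verifications being routine.
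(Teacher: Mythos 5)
Your proposal is correct and follows essentially the same route as the paper's own proof: Chebotarev applied to $\mathcal{H}(L)$ to get infinitely many completely split primes $q$, principality of a prime $\mathfrak{P}$ of $L$ above $q$ to produce $x$ with $\Nr_{L/\QQ}(x)=\pm q$, and then Corollary \ref{suffP} with $I_{1}(x)=P^{1}$ and $p\nmid 1$. Your added details (that $\mathcal{H}(L)/\QQ$ is Galois by canonicity of the Hilbert class field, and that distinct $q$ yield distinct $x$) are correct elaborations of steps the paper leaves implicit.
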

%
%
\begin{proof}
By Chebotarev's Density Theorem, there are infinitely many rational primes which split completely in $\mathcal{H}(L)$.
If $q$ is such a prime, let $Q$ be a prime ideal in $L$ lying above $q$. So $Q=\OO_L x$ for some $x\in \OO_{L}$. Clearly, $\Nr_{L/\QQ}(x)=q$.
The assertion then follows from Corollary \ref{suffP}.
\end{proof}
%
%
\subsection{Examples}
We shall illustrate the above results on $C_{p}$-extensions $F/\QQ$ of type $(S_1)$ [11, Chap. 2]. For that purpose we prove the following lemma.
%
%
\begin{lemma}
Let $r$ be a rational prime, $r\equiv 1\mod k$, $k\in\NN$.
Let $m=m(r)$ be a primitive root modulo $r$.
Consider the sum
\begin{equation}
    \delta_{k}(r)=
    \zeta_{r}+\zeta_{r}^{m^{k}}+\zeta_{r}^{m^{2k}}+\ldots+
    \zeta_{r}^{m^{\left(\frac{r-1}{k}-1\right)k}}.
\end{equation}
Then $\QQ(\delta_{k}(r))/\QQ$ is a $C_{k}$-extension.
\end{lemma}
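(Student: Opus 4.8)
The plan is to realize $\delta_k(r)$ as a Gaussian period and to identify $\QQ(\delta_k(r))$ with the unique degree-$k$ subfield of the cyclotomic field $\QQ(\zeta_r)$. First I would recall that $\Gal(\QQ(\zeta_r)/\QQ)\cong(\ZZ/r\ZZ)^{*}$ is cyclic of order $r-1$, and that since $m$ is a primitive root modulo $r$ the automorphism $\sigma\colon\zeta_r\mapsto\zeta_r^{m}$ is a generator. As $k\mid(r-1)$, the subgroup $H=\langle\sigma^{k}\rangle$ has index $k$, and its fixed field $M:=\QQ(\zeta_r)^{H}$ is the unique subfield of $\QQ(\zeta_r)$ of degree $k$ over $\QQ$; being the fixed field of a subgroup of a cyclic (hence abelian) group, $M/\QQ$ is Galois with cyclic group $\Gal(M/\QQ)\cong\langle\sigma\rangle/H\cong C_{k}$.

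Next I would observe that the summands of $\delta_k(r)$ are exactly the images of $\zeta_r$ under $H$: indeed $\sigma^{jk}(\zeta_r)=\zeta_r^{m^{jk}}$, so
\[
\delta_k(r)=\sum_{j=0}^{(r-1)/k-1}\sigma^{jk}(\zeta_r)=\sum_{h\in H}h(\zeta_r)=\Tr_{\QQ(\zeta_r)/M}(\zeta_r).
\]
In particular $\delta_k(r)\in M$, so $[\QQ(\delta_k(r)):\QQ]$ divides $k$. It then remains to show this degree equals $k$, equivalently that the stabilizer of $\delta_k(r)$ in $\langle\sigma\rangle$ is no larger than $H$; this forces $\QQ(\delta_k(r))=M$ and yields the claim.

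To pin down the degree I would compute the full $\langle\sigma\rangle$-orbit of $\delta_k(r)$. For $0\le i\le k-1$ the conjugate $\sigma^{i}(\delta_k(r))$ is the Gaussian period $\eta_i=\sum_{a\in m^{i}H}\zeta_r^{a}$, where the exponent set $m^{i}H\subseteq(\ZZ/r\ZZ)^{*}$ runs through the $k$ cosets of $H$ as $i$ varies. Since these cosets are pairwise disjoint and cover $\{1,\ldots,r-1\}$, the periods $\eta_0,\ldots,\eta_{k-1}$ are sums of $\zeta_r^{a}$ over disjoint index sets. The key step is then the linear independence of $\{\zeta_r,\zeta_r^{2},\ldots,\zeta_r^{r-1}\}$ over $\QQ$: this family is a $\QQ$-basis of $\QQ(\zeta_r)$, obtained from the standard basis $\{1,\zeta_r,\ldots,\zeta_r^{r-2}\}$ via the single cyclotomic relation $\zeta_r^{r-1}=-1-\zeta_r-\cdots-\zeta_r^{r-2}$. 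Consequently $\eta_i=\eta_{i'}$ forces the index sets $m^{i}H$ and $m^{i'}H$ to coincide, i.e.\ $k\mid(i-i')$. Hence the $\eta_i$ are pairwise distinct, $\delta_k(r)$ has exactly $k$ conjugates, and $[\QQ(\delta_k(r)):\QQ]=k$, so $\QQ(\delta_k(r))=M$ is a $C_{k}$-extension of $\QQ$.

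I expect the only genuinely delicate point to be the distinctness of the periods in the last paragraph; once that linear-independence argument is in place the rest is routine cyclotomic Galois theory. I would also double-check the boundary bookkeeping that the summation index $(r-1)/k-1$ produces precisely the $(r-1)/k$ distinct elements of $H$ without repetition, which is immediate from $m$ having order $r-1$ modulo $r$.
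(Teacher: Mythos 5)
Your proof is correct and takes essentially the same approach as the paper's: both identify $\delta_k(r)$ as the trace of $\zeta_r$ to the fixed field of $\langle\sigma^k\rangle$ and then rule out a larger stabilizer by noting that an equality of two conjugate sums would produce a nontrivial vanishing $\QQ$-linear combination of distinct powers of $\zeta_r$, contradicting linear independence coming from the basis $\{1,\zeta_r,\ldots,\zeta_r^{r-2}\}$. The only cosmetic difference is that you count the $k$ pairwise distinct Gaussian periods directly to pin down the stabilizer, whereas the paper argues by contradiction through a proper divisor $d$ of $k$ and divides by $\zeta_r$ to reduce to the standard power basis.
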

%
%
\begin{proof}
$\Gal(\QQ(\zeta_{r})/\QQ)$ is cyclic of order $r-1$, generated by the automorphism $\sigma:\zeta_{r}\mapsto\zeta_{r}^{m}$.
By the Fundamental Theorem of Galois Theory, it is enough to prove $\QQ(\delta_{k}(r))=\QQ(\zeta_{r})^{\sigma^{k}}$.
The inclusion $\QQ(\delta_{k}(r))\subseteq\QQ(\zeta_{r})^{\sigma^{k}}$ is because $\sigma^{k}$ moves cyclically the summands of (3) (an alternative argument: $\delta_{k}(r)$ is the image of $\zeta_{r}$ under the trace map $\Tr_{\QQ(\zeta_{r})/\QQ(\zeta_{r})^{\sigma^k}}$).
Suppose that $\QQ(\delta_{k}(r))\subsetneqq\QQ(\zeta_{r})^{\sigma^{k}}$.
There exist a proper divisor $d$ of $k$ such that $\QQ(\delta_{k}(r))=\QQ(\zeta_{r})^{\sigma^{d}}$.
In particular, $\sigma^{d}(\delta_{k}(r))=\delta_{k}(r)$, or
\begin{equation}
    \sum_{j=0}^{\frac{r-1}{k}-1}\zeta_{r}^{m^{jk+d}}-
    \sum_{j=0}^{\frac{r-1}{k}-1}\zeta_{r}^{m^{jk}}=0.
\end{equation}
The summands in (4) are distinct.
For if $\zeta_{r}^{m^{jk+d}}=\zeta_{r}^{m^{ik}}$ for some $i,j=0,1,\ldots,\frac{r-1}{k}-1$, $j\geq i$, then $m^{(j-i)k+d}\equiv 1(\mod r)$.
$m$ is primitive modulo $r$ so $r-1$ divides $(j-i)k+d$.
But, $(j-i)k+d<(\frac{r-1}{k}-1)k+k=r-1$, a contradiction.
To this end, there are $2(r-1)/k$ ($\leq r-1$) summands, and dividing each of them by $\zeta_{r}$ gives us a linear dependence among $1,\zeta_{r},\zeta_{r}^2,\ldots,\zeta_{r}^{r-2}$, a contradiction. $\QQ(\delta_{k}(r))=\QQ(\zeta_{r})^{\sigma^{k}}$, as required.
\end{proof}
%
%
\begin{example}
\emph{
$p=3$, $r=7$, $m(r)=3$. \
$\delta_{3}(7)=\zeta_{7}+\zeta_{7}^{-1}$. \
$F/\QQ=\QQ(\delta_{3}(7))/\QQ$.
Thus, $L/K=\QQ(\zeta_{3},\delta_{3}(7))/\QQ(\zeta_{3})$ is a
$C_{3}$-extension, generated by $\bar\sigma:\zeta_{7}\mapsto\zeta_{7}^{2}$, $\zeta_{3}\mapsto\zeta_{3}$.
Let $x=\delta_{3}(7)+\zeta_{3}$. \
$\gamma=\Nr_{L/K}(x)=3-\zeta_{3}$, \
$\Nr_{L/\QQ}(x)=\Nr_{K/\QQ}(\gamma)=13$ \ ($\tau:\zeta_{3}\mapsto\zeta_{3}^{-1}$).
$13$ splits
completely in $L$. By Corollary \ref{suffP}, $\Phi(\gamma)$, $\zeta_{3}\Phi(\gamma)\not\in {L^{*}}^{3}$.
}
\end{example}
%
%
\begin{example}
\emph{
$p=3$, $r=19$, $m(r)=2$.
$$
\delta_{3}(19)=\zeta_{19}+\zeta_{19}^{-1}+\zeta_{19}^{7}+\zeta_{19}^{-7}+\zeta_{19}^{8}+\zeta_{19}^{-8}.
$$
$F/\QQ=\QQ(\delta_{3}(19))/\QQ$. Thus,
$L/K=\QQ(\zeta_{3},\delta_{3}(19))/\QQ(\zeta_{3})$ is a
$C_{3}$-extension, generated by
$\bar\sigma:\zeta_{19}\mapsto\zeta_{19}^{6}$,
$\zeta_{3}\mapsto\zeta_{3}$. Let $x=\delta_{3}(19)+\zeta_{3}+1$. \
$\gamma=\Nr_{L/K}(x)=-7\zeta_3$, \
$\Nr_{L/\QQ}(x)=\Nr_{K/\QQ}(\gamma)=7^{2}$ and \ $7$ splits completely
in $L$. However, $\Phi(\gamma)$ generates a third power ideal in
$\OO_K$ although $\Phi(\gamma)$ and $\zeta_{3}\Phi(\gamma)$ are not third power elements in $L^{*}$. Therefore, Theorem \ref{criteria}, which is an \emph{ideal-theoretic} criterion, is failed while testing this $x$.
On the other hand, it tells us something
about the prime factorization of $\OO_{L}x$ in $L$: Since $x$ satisfies condition (a) of this theorem and does not satisfy condition (b), we have
$$
\chi(I_{1}(x))=2\beta_{1}+\beta_{2}\equiv 0\mod 3.
$$
Also, by equation (2) we have $\beta_{1}+\beta_{2}=2$.
Hence, $\beta_{1}=\beta_{2}=1$, thus $I=\OO_{K}7$.
$\OO_{L}x$ is therefore a product of two distinct primes of $O_{L}$, each of them has norm $7$ over $\QQ$.
}
\end{example}
%
%
\begin{example}
\emph{
$p=3$, $r=73$, $m(r)=5$. (Remark: $\OO_{K}=\ZZ[\zeta_{73}]$ is not a unique factorization domain).
$F/\QQ=\QQ(\delta_{3}(73))/\QQ$.
Thus, $L/K=\QQ(\zeta_{3},\delta_{3}(73))/\QQ(\zeta_{3})$ is a
$C_{3}$-extension, generated by $\bar\sigma:\zeta_{73}\mapsto\zeta_{73}^{24}$,  $\zeta_{3}\mapsto\zeta_{3}$.
Let $x=\delta_{3}(73)-\zeta_{3}+1$. \
$\gamma=\Nr_{L/K}(x)=21\zeta_{3}$, \
$\Nr_{L/\QQ}(x)=\Nr_{K/\QQ}(\gamma)=3^{2}7^{2}$. \
Thus, $\Phi(\gamma)=21^{3}\zeta_{3}\not\in {L^{*}}^{3}$.
}
\end{example}
%
%
\begin{example}
\emph{
$p=5$, $r=11$, $m(r)=2$. \
$\delta_{5}(11)=\zeta_{11}+\zeta_{11}^{-1}$. \
$F/\QQ=\QQ(\delta_{5}(11))/\QQ$.
Thus, $L/K=\QQ(\zeta_{5},\delta_{5}(11))/\QQ(\zeta_{5})$ is a
$C_{5}$-extension, generated by $\bar\sigma:\zeta_{11}\mapsto\zeta_{11}^{2}$, $\zeta_{5}\mapsto\zeta_{5}$.
Let $x=\delta_{5}(11)-\zeta_{5}$. \
$\gamma=\Nr_{L/K}(x)$,
$\Nr_{L/\QQ}(x)=\Nr_{K/\QQ}(\gamma)=991$ ($991$ is a rational prime).
$991$ splits
completely in $L$, hence $\Phi(\gamma)$ does not generate a fifth power ideal of $\OO_L$.
}
\end{example}
%
%
\section{Explicit Constructions of the non-Abelian $p^3$-Extensions}
\subsection{}
Let $N/k$ be a Galois extension with Galois group $G=\Gal(N/k)$.
Let $\pi:E\twoheadrightarrow G$ be an epimorphism of a given finite group $E$ onto $G$.
\emph{The Galois theoretical embedding problem} is to find a pair $(T/k,\varphi)$ consisting of a Galois extension $T/k$ which contains $N/k$, and an isomorphism $\varphi:\Gal(T/k)\to E$ such that $\pi\circ\varphi=\Res_{T/N}$, where $\Res_{T/N}:\Gal(T/k)\twoheadrightarrow G$ is the restriction map.
We say that the embedding problem is \emph{given by $N/k$ and $\pi:E\twoheadrightarrow G$}.
If such a pair $(T/k,\varphi)$ does exist, we say that the embedding problem is \emph{solvable}.
$T$ is called a \emph{solution field}, and $\varphi$ is called a \emph{solution isomorphism}. $A:=\ker\pi$ is called the \emph{kernel of the embedding problem}.
If $A$ is cyclic of order $m$, the characteristic of $k$ does not divide $m$, and $N^{*}$ contains $\mu_{m}$ (the multiplicative group consisting of the $m^{\Th}$ roots of unity), then the embedding problem is of \emph{Brauer type} (see [4]).
\\
\indent
If $\varphi$ is required only to be a monomorphism, we say that the embedding problem is \emph{weakly solvable}.
In that case, the pair $(T/k,\varphi)$ is called an \emph{improper solution} (or a \emph{weak solution}).
An improper solution can also be interpret in terms of Galois algebra ([1], [3] Chapter 4).
The monograph [2] treats the embedding problem from this point of view.
In the cases discussed in this paper, any improper solution will automatically be a solution;
the kernel $A$ will be contained in the Frattini subgroup of $E$ (see [2], Chapter.6, Corollary 5).
\\
\indent
The constructive approach to embedding problems is to describe explicitly the solution field $T$ (in case it exists).
We shall consider a series of embedding problems, in each $T/N$ will be a $C_{p}$-extension, and the goal is to exhibit an explicit primitive element of $T$ over $N$ (See also [7] for further details).
In practice, given a set of generators $\gamma_{1},\ldots,\gamma_{d}$ for $G$, we must extend them to $k$-automorphisms $\bar{\gamma_{1}},\ldots,\bar{\gamma_{d}}$ of a larger field $T$, with common fixed field $k$.
$T/k$ is then a Galois extension generated by $\bar{\gamma_{1}},\ldots,\bar{\gamma_{d}},\bar{\gamma}_{d+1},\ldots,\bar{\gamma_{c}}$, where $\bar{\gamma}_{d+1},\ldots,\bar{\gamma_{c}}$ generate $\Gal(T/N)$.
The extensions should be constructed in such a way that the $\bar{\gamma_{i}}$'s behave as a given set of $c$ generators for $E$.
$T$ is then a solution field, and the isomorphism which takes each $\bar{\gamma_{i}}$ to the corresponding generator of $E$ is a solution isomorphism.
\subsection{}
Let $N$ be a field of characteristic different from $p$.
$\mu_{p}\subseteq N^{*}$, and let $a\in N^{*}\setminus{N^{*}}^{p}$.
Then $N(\sqrt[p]{a})/N$ is a $C_{p}$-extension.
A generator for $\Gal(N(\sqrt[p]{a})/N)$ is given by $\sqrt[p]{a}\mapsto\zeta_{p}\sqrt[p]{a}$.
The only elements $\theta\in N(\sqrt[p]{a})$ with the property: $\theta^{p}\in N$, are those of the form $z(\sqrt[p]{a})^{j}$ for some $z\in N$, $j=0,1,\ldots,p-1$.
\\
\indent
Suppose further that $N/k$ is a Galois extension, then $N(\sqrt[p]{a})/k$ is a Galois extension if and only if, for each $\gamma\in\Gal(N/k)$, there exist $i_{\gamma}\in\ZZ\setminus p\ZZ$ \ such that \ $\gamma{a}/a^{i_{\gamma}}\in {N^{*}}^{p}$.
\subsection{}
Let $E$ be a non-abelian group of order $p^{3}$. Up to isomorphism, $E$ is necessarily one of the following two groups:
\newline
\textbf{1.} \ The Heisenberg group:
$$
    \HH_{p^{3}}=\langle \ u,v,w \ : \ u^{p}=v^{p}=w^{p}=1, \ wu=uw, \ wv=vw, \ vu=uvw  \ \rangle.
$$
In other words, $\HH_{p^{3}}$ is generated by two elements $u$, $v$ of order $p$, such that their commutator $w$ is central.
It can be realized as the subgroup of $\GL_{3}(\ZZ/\ZZ p)$ consisting of upper triangular matrices with 1's in the diagonal.
\\
\textbf{2.} \ The semidirect product:
$$
    C_{p^{2}}\rtimes C_{p}=\langle \ u,v \ : \ u^{p^{2}}=v^{p}=1, \ vu=u^{p+1}v \ \rangle.
$$
For the classification of the non-abelian groups of order $p^3$, see [10, page 67].
\subsection{}
The aim of the following Theorem 4.1 (resp. Theorem 4.2) is to describe \emph{how} $\HH_{p^3}$-extensions (resp. $C_{p^{2}}\rtimes C_{p}$-extensions) are constructed over $\QQ$, starting with elements $x\in L^{*}$ with
$\Phi(\Nr_{L/K}(x))\not\in {L^{*}}^{p}$ (resp. $\Phi(\zeta_{p}\Nr_{L/K}(x))\not\in {L^{*}}^{p}$).
These constructions are essentially known -- they can be obtained as private cases of [4, Theorem 2.4.1 and Corollary 8.1.5].
We give a direct and self contained proof, consistent with our notation and with subsection 4.1.
\begin{theorem}[The case $E=\HH_{p^{3}}$] \
\\
Let $x\in L^{*}$ such that $b=b(x)=\Phi(\Nr_{L/K}(x))\not\in {L^{*}}^{p}$.
Then $G=\Gal(L(\sqrt[p]{b})/K)\cong C_p\times C_p$, generated by $\bar{\bar\sigma}$ and $\eta$, where, $\bar{\bar\sigma}: \sqrt[p]{b}\mapsto\sqrt[p]{b}$, \
$\eta:\sqrt[p]{b}\mapsto\zeta_{p}\sqrt[p]{b}$, \ $\bar{\bar\sigma}|_L=\bar\sigma$, \ \ $\eta|_{L}=\id_{L}$.
Moreover, the (Brauer type) embedding problem given by $L(\sqrt[p]{b})/K$ and $\pi:E\twoheadrightarrow G$ is solvable, where $\pi: u\mapsto\bar{\bar\sigma}$, $v\mapsto\eta$; \
a solution field is $M=L(\sqrt[p]{\omega},\sqrt[p]{b})$, \
$\omega=\Phi(\beta)$, \ $\beta=x^{p-1}\bar\sigma x^{p-2}\ldots\bar\sigma^{p-2}x$.
\\ \indent
Since $\omega\in\Phi(L^{*})$, $M/\QQ$ is a Galois extension with
$\Gal(M^{\kappa}/\QQ)\cong E$, where $\kappa$ is the extension of $\bar{\tau}$ to $M$, given by
$\kappa:\sqrt[p]{\omega}\mapsto\beta^{(1-e^{p-1})/p}\left({\sqrt[p]{\omega}}\right)^{e}$, and
$M^{\kappa}=F(\alpha)$, \ $\alpha=\sqrt[p]{\omega}+\kappa\sqrt[p]{\omega}+\ldots+\kappa^{p-2}\sqrt[p]{\omega}$.
Finally, $F(\alpha)$ is the splitting field of the minimal polynomial for
$\alpha$ over $\QQ$.
\end{theorem}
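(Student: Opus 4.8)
Throughout I would let everything flow from two ``twisting identities'' for the elements $b=\Phi(\Nr_{L/K}x)$ and $\omega=\Phi(\beta)$ under the two commuting generators $\bar\sigma,\bar\tau$ of the abelian group $\Gal(L/\QQ)$. First, $b=\Phi(\Nr_{L/K}x)$ lies in $K^{*}$, being $\Phi_{K/\QQ}$ applied to $\Nr_{L/K}x\in K^{*}$, so $\bar\sigma b=b$. A telescoping of the product defining $\beta$ gives $\bar\sigma\beta=\beta\,\Nr_{L/K}(x)\,x^{-p}$; applying $\Phi$, which commutes with $\bar\sigma$ since $\Gal(L/\QQ)$ is abelian, and using $\Phi(\Nr_{L/K}x)=b$, yields $\bar\sigma\omega=\omega\, b\,\Phi(x)^{-p}$, so $\bar\sigma\omega/\omega\equiv b\pmod{(L^{*})^{p}}$. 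Second, a telescoping that uses only $\bar\tau^{\,p-1}=\id$ gives the Brauer-type relation $\bar\tau\,\Phi(y)=\Phi(y)^{e}\,y^{\,1-e^{p-1}}$ for every $y$; with $y=\beta$ and $y=\gamma:=\Nr_{L/K}x$ this reads $\bar\tau\omega=\omega^{e}\beta^{\,1-e^{p-1}}$ and $\bar\tau b=b^{e}\gamma^{\,1-e^{p-1}}$, the exponent $(1-e^{p-1})/p$ being an integer because $e$ is a primitive root. These four formulas are the engine of the whole proof.

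For the structure of $G$: since $b\in K^{*}\setminus(L^{*})^{p}$ and $\zeta_{p}\in K$, the field $L(\sqrt[p]{b})$ is the compositum of the linearly disjoint $C_{p}$-extensions $L/K$ and $K(\sqrt[p]{b})/K$, hence Galois over $K$ with group $\langle\bar{\bar\sigma}\rangle\times\langle\eta\rangle\cong C_{p}\times C_{p}$; the relation $\bar\sigma b=b$ is exactly what makes the extension $\bar{\bar\sigma}$ of $\bar\sigma$ fixing $\sqrt[p]{b}$ well defined and of order $p$. To solve the embedding problem I would first deduce from $\bar\sigma\omega/\omega\equiv b$ and $\bar\sigma b=b$ that $\omega\notin(L^{*})^{p}$ and that $\omega,b$ are independent in $L^{*}/(L^{*})^{p}$, whence $[M:L]=p^{2}$ and $[M:K]=p^{3}$. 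Then I define $K$-automorphisms $U,V$ of $M$ by $U|_{L}=\bar\sigma,\ U\sqrt[p]{b}=\sqrt[p]{b},\ U\sqrt[p]{\omega}=\sqrt[p]{\omega}\,\sqrt[p]{b}\,\Phi(x)^{-1}$ and $V|_{L}=\id,\ V\sqrt[p]{b}=\zeta_{p}\sqrt[p]{b},\ V\sqrt[p]{\omega}=\zeta_p^{c}\sqrt[p]{\omega}$; each is well defined precisely because the $\bar\sigma$-identity exhibits $U\sqrt[p]{\omega}$ as a $p$-th root of $\bar\sigma\omega$ inside $M$. The one genuinely arithmetic point is $U^{p}=\id$: iterating gives $U^{p}\sqrt[p]{\omega}=\sqrt[p]{\omega}\cdot(\sqrt[p]{b})^{p}\cdot\big(\prod_{j=0}^{p-1}\bar\sigma^{j}\Phi(x)\big)^{-1}$, and the product equals $\Nr_{L/K}\Phi(x)=\Phi(\Nr_{L/K}x)=b$, cancelling $(\sqrt[p]{b})^{p}=b$. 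A short computation then shows $W:=V^{-1}U^{-1}VU$ fixes $L(\sqrt[p]{b})$ and sends $\sqrt[p]{\omega}\mapsto\zeta_{p}\sqrt[p]{\omega}$, so $W$ is central of order $p$; hence $\langle U,V\rangle$ is a non-abelian quotient of $\HH_{p^{3}}$, therefore all of it, $\Res$ sends $U,V,W$ to $\bar{\bar\sigma},\eta,1$, matching $\pi$, and the embedding problem is solved.

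For the descent I extend $\bar\tau$ to $\kappa$ on $M$ by $\kappa\sqrt[p]{\omega}=\beta^{(1-e^{p-1})/p}(\sqrt[p]{\omega})^{e}$ and $\kappa\sqrt[p]{b}=\gamma^{(1-e^{p-1})/p}(\sqrt[p]{b})^{e}$; the $\bar\tau$-identities show these are $p$-th roots of $\bar\tau\omega,\bar\tau b$, so $\kappa$ is well defined. The decisive use of $\omega\in\Phi(L^{*})$ is the order count: with $d=(1-e^{p-1})/p$ one gets $\kappa^{k}\sqrt[p]{\omega}=\big(\prod_{j=0}^{k-1}(\bar\tau^{j}\beta)^{de^{\,k-1-j}}\big)(\sqrt[p]{\omega})^{e^{k}}$, and at $k=p-1$ the $L$-factor is exactly $\Phi(\beta)^{-f}=\omega^{-f}$ (writing $e^{p-1}=1+pf$), which cancels $(\sqrt[p]{\omega})^{e^{p-1}}=\omega^{f}\sqrt[p]{\omega}$; the same holds on $\sqrt[p]{b}$, so $\kappa^{p-1}=\id$ and $\kappa$ has order $p-1$. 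The $\bar\sigma,\bar\tau$-identities also show every $\Gal(L/\QQ)$-conjugate of $\omega$ and $b$ equals $\omega^{i}b^{j}$ times a $p$-th power in $M$, so $M/\QQ$ is Galois, generated by $U,V,W,\kappa$. Feeding the four identities into the bracket computations, where the numerical identity $pd+e^{p-1}-1=0$ is what makes the $U$–$\kappa$ commutator collapse, shows $\kappa$ commutes with $U,V,W$; hence $\Gal(M/\QQ)=\HH_{p^{3}}\times\langle\kappa\rangle$ with $\langle\kappa\rangle\cong C_{p-1}$ normal, and $\Gal(M^{\kappa}/\QQ)\cong\Gal(M/\QQ)/\langle\kappa\rangle\cong E$.

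It remains to identify $M^{\kappa}$ with $F(\alpha)$, and \emph{this is the step I expect to be the real obstacle}. Since $\kappa$ fixes $F$ and has order $p-1$, the element $\alpha=\sum_{j=0}^{p-2}\kappa^{j}\sqrt[p]{\omega}=\Tr_{M/M^{\kappa}}\sqrt[p]{\omega}$ lies in $M^{\kappa}$ and $F\subseteq M^{\kappa}$, so $F(\alpha)\subseteq M^{\kappa}$; as $[M^{\kappa}:F]=p^{2}$ with $\Gal(M^{\kappa}/F)$ the restriction of $\langle V,W\rangle\cong C_{p}\times C_{p}$, equality is equivalent to $\alpha$ having trivial stabilizer in this group. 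The plan is to compute the orbit explicitly: using that $\kappa$ commutes with $V,W$ and that $\kappa^{j}\zeta_{p}=\zeta_{p}^{\,e^{j}}$, one finds \[(V^{a}W^{b})\,\alpha=\sum_{j=0}^{p-2}\zeta_{p}^{\,(ca+b)e^{j}}\,\kappa^{j}\sqrt[p]{\omega},\] and the task is to show that the map $(a,b)\mapsto(V^{a}W^{b})\alpha$ is injective on $\Gal(M^{\kappa}/F)$. The delicacy is that $\alpha$ lies in $L(\sqrt[p]{\omega})$, on which $\langle V,W\rangle$ acts through the single Kummer character $\sqrt[p]{\omega}\mapsto\zeta_{p}\sqrt[p]{\omega}$, so the whole force of the argument must go into pinning down the normalization $c$ of the lift $V$ (free up to $\langle W\rangle$) and exploiting the cyclotomic twist so that $V$ and $W$ act \emph{two-dimensionally} on $\alpha$; this is the one place where a careless choice collapses $F(\alpha)$ to a proper subfield, and I would treat it as the crux of the proof. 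Once trivial stabilizer is established, $F(\alpha)=M^{\kappa}=\QQ(\alpha)$, and since $M^{\kappa}/\QQ$ is Galois this field is the splitting field over $\QQ$ of the minimal polynomial of $\alpha$, completing the argument.
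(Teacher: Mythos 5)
Up to the determination of $\Gal(M/\QQ)\cong\HH_{p^3}\times\langle\kappa\rangle$, your argument is correct and follows the paper's route, with more detail than the paper supplies: your two telescoping identities are exactly the paper's equation (5) and its $\bar\tau$-analogue, your $U,V,W$ are the paper's $\tilde\sigma,\tilde\eta,\tilde\lambda$, and your explicit verifications that $\kappa^{p-1}=\id$ (via $\prod_{j=0}^{p-2}(\bar\tau^{j}\beta)^{de^{p-2-j}}=\Phi(\beta)^{d}=\omega^{-f}$) and that $pd+e^{p-1}-1=0$ forces $\kappa$ to be central fill in what the paper dismisses as ``easy to verify.'' Those computations are sound.

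The gap is the step you yourself flag as the crux, and it is worse than delicate: it is unfixable as you have framed it, and your own orbit formula says so. Since $(V^{a}W^{b})\alpha=\sum_{j}\zeta_{p}^{(ca+b)e^{j}}\kappa^{j}\sqrt[p]{\omega}$ depends only on $ca+b\bmod p$, the element $VW^{-c}$ fixes $\alpha$ for \emph{every} normalization $c$; no choice of lift restores injectivity. Structurally the reason is that $\alpha\in L(\sqrt[p]{\omega})$ while $\Gal(M/L(\sqrt[p]{\omega}))$ is a nontrivial order-$p$ subgroup of $\langle V,W\rangle$ (namely $\langle VW^{-c}\rangle$, i.e.\ $\langle\tilde\eta\tilde\lambda^{-1}\rangle$ in the paper's normalization), so the stabilizer of $\alpha$ in $\Gal(M/F)$ strictly contains $\langle\kappa\rangle$ and $F(\alpha)\subseteq L(\sqrt[p]{\omega})\cap M^{\kappa}$, a field of degree $p^{2}$ over $\QQ$. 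Hence $F(\alpha)\neq M^{\kappa}$, and the theorem's identification is false as stated. You should know that the paper's own proof commits exactly the oversight you were circling: it checks only that $\tilde\eta$ and $\tilde\lambda$ individually move $\alpha$, which does not exclude the mixed element $\tilde\eta\tilde\lambda^{-1}$; and the paper's Example 6.6 silently confirms the true degree, since $\Irr(\alpha;\QQ)$ there has degree $9=p^{2}$, not $p^{3}=27$. What survives, and what you should prove instead, is the final sentence of the theorem in the corrected form: show that the stabilizer of $\alpha$ in $\Gal(M/\QQ)$ is exactly $\langle VW^{-c}\rangle\times\langle\kappa\rangle$ (any element involving $U^{s}$ with $s\not\equiv 0\bmod p$ introduces powers of $\sqrt[p]{b}$, so comparing supports in the $L$-basis $(\sqrt[p]{\omega})^{i}(\sqrt[p]{b})^{k}$ of $M$ rules it out), and that its core is $\langle\kappa\rangle$ (conjugation by $U$ sends $VW^{-c}$ to $VW^{-c-1}$, so the order-$p$ part has trivial core). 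Then $\QQ(\alpha)=F(\alpha)$ is a non-normal subfield of degree $p^{2}$, and the splitting field of $\Irr(\alpha;\QQ)$ is its Galois closure $M^{\kappa}$, which is indeed an $\HH_{p^{3}}$-extension of $\QQ$.
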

\begin{equation*}
    \xymatrix{
          &                       & M=L(\sqrt[p]{\omega},\sqrt[p]{b}) \ar@{-}[d]^{p} \ar@{-}[dl]_{p-1} \ar@{.}[rr] & & \cdot \ar@{<.>}[ddd]^{E} \\
          \cdot \ar@{<.>}[ddd]_{E} \ar@{.}[r] & M^{\kappa}=F(\alpha) \ar@{-}[dd]^{p^2} & L(\sqrt[p]{b}) \ar@{-}[d]^{p} & & \\
                               &  & L \ar@{-}[dl]_{p-1} \ar@{-}[dr]^{p} & & \\
          & F \ar@{-}[dr]^{p}          &  & K \ar@{-}[dl]_{p-1} \ar@{.}[r]      & \cdot \\
          \cdot \ar@{.}[rr] & & \QQ & & &
   }
\end{equation*}
\begin{proof}
Since $\bar{\sigma}b=b$, \ $L(\sqrt[p]{b})/K$ is a Galois extension.
We extend $\bar{\sigma}$ to
$\Gal(L(\sqrt[p]{b})/K)$ by $\bar{\bar{\sigma}}(\sqrt[p]{b})=\sqrt[p]{b}$.
$\eta$ generates $\Gal(L(\sqrt[p]{b})/L)$, and $\bar{\sigma}$ generates $\Gal(L/K)$.
It follows that $\Gal(L(\sqrt[p]{b})/K)$ is generated by $\bar{\bar{\sigma}}$ and $\eta$.
Clearly,
$\bar{\bar{\sigma}}\eta=\eta\bar{\bar{\sigma}}$ \ and \ $\bar{\bar{\sigma}}^{p}=\eta^{p}=\id_{L(\sqrt[p]{b})}$, thus,
$L(\sqrt[p]{b})/K$ is a $C_{p}\times C_{p}$-extension [$L(\sqrt[p]{b})$ is a Kummer extension of $K$ of exponent $p$].
\\
\indent
$\eta(\omega)=\omega$ \ and,
\begin{equation}
\frac{\bar{\bar{\sigma}}(\omega)}{\omega}=\frac{\bar{\sigma}(\Phi(\beta))}{\Phi(\beta)}=
\frac{\Phi(\Nr_{L/K}(x))}{\Phi(x)^{p}}=
\left(\frac{\sqrt[p]{b}}{\Phi(x)}\right)^{p}\in{L(\sqrt[p]{b})^{*}}^{p},
\end{equation}
thus, $M/K$ is a Galois extension.
We extend $\bar{\bar{\sigma}}$ and $\eta$ to
$\tilde{\sigma},\tilde{\eta}\in\Gal(M/K)$ by
\begin{equation}
\tilde{\sigma}(\sqrt[p]{\omega})=\frac{\sqrt[p]{b}}{\Phi(x)}\sqrt[p]{\omega}, \ \ \ \ \ \tilde{\eta}(\sqrt[p]{\omega})=\zeta_{p}\sqrt[p]{\omega},
\end{equation}
respectively.
Note that $M/L(\sqrt[p]{b})$ is a $C_{p}$-extension.
For if $\omega\in {L(\sqrt[p]{b})^{*}}^{p}$ then $\omega=z^{p}b^{j}$ for some $z\in L$, $j=0,1,\ldots p-1$.
From $(5)$ it follows that $b=(\Phi(x)(\bar{\sigma}(z)/z))^{p}\in{L^{*}}^{p}$ - a contradiction.
\\
\indent
$\Gal(M/K)$ is then generated by $\tilde{\sigma},\tilde{\eta}$ and $\tilde{\lambda}$,
where $\tilde{\lambda}(\sqrt[p]{\omega})=\zeta_{p}\sqrt[p]{\omega}$ - a generator for
$\Gal(M/L(\sqrt[p]{b}))$.
Clearly, $\tilde{\eta}$ and $\tilde{\lambda}$ is of order $p$, and since
\small
$$
    \tilde{\sigma}^{p}(\sqrt[p]{\omega})=
    \frac{b}{\Phi(x)\bar{\sigma}(\Phi(x))\cdots\bar{\sigma}^{p-1}(\Phi(x))}\sqrt[p]{\omega}=
    \frac{b}{\Nr_{L/K}(\Phi(x))}\sqrt[p]{\omega}=\frac{b}{\Phi(\Nr_{L/K}(x))}\sqrt[p]{\omega}=\sqrt[p]{\omega},
$$
\normalsize
$\tilde{\sigma}$ is of order $p$.
$\tilde{\lambda}$ commutes with both $\tilde{\sigma}$ and $\tilde{\eta}$, and the
relation $\tilde{\eta}\tilde{\sigma}=\tilde{\sigma}\tilde{\eta}\tilde{\lambda}$ holds.
Therefore, the group isomorphism $\varphi:\Gal(M/K)\cong\HH_{p^3}$,
sending $\tilde{\sigma}\mapsto u$, $\tilde{\eta}\mapsto v$, $\tilde{\lambda}\mapsto w$, is a solution isomorphism
for the given embedding problem.
\\
\indent
Now,
$$
\frac{\bar{\tau}\Phi(x)}{\Phi(x)^{e}}=\Phi\left(\frac{\bar{\tau}x}{x^e}\right)=
\frac{\bar{\tau}x^{e^{p-2}}}{x^{e^{p-1}}}\cdot
\frac{\bar{\tau}^{2}x^{e^{p-3}}}{\bar{\tau}x^{e^{p-2}}}\cdots
\frac{\bar{\tau}^{p-2}x^{e}}{\bar{\tau}^{p-3}x^{e^2}}\cdot
\frac{x}{\bar{\tau}^{p-2}x^{e}}=x^{1-e^{p-1}},
$$
hence,
$$
\frac{\bar{\tau}\omega}{\omega^e}=
\frac{\bar{\tau}\Phi\left(x^{p-1}\bar{\sigma}x^{p-2}\cdots\bar{\sigma}^{p-2}x\right)}
{\Phi\left(x^{p-1}\bar{\sigma}x^{p-2}\cdots\bar{\sigma}^{p-2}x\right)^e}=
\left(\frac{\bar{\tau}\Phi(x)}{\Phi(x)^e}\right)^{p-1}
\bar{\sigma}\left(\frac{\bar{\tau}\Phi(x)}{\Phi(x)^e}\right)^{p-2}\cdots \
\bar{\sigma}^{p-2}\left(\frac{\bar{\tau}\Phi(x)}{\Phi(x)^e}\right)
$$
$$
=(x^{p-1}\bar{\sigma}x^{p-2}\cdots\bar{\sigma}^{p-2}x)^{1-e^{p-1}}=
\left(\beta^{\frac{1-e^{p-1}}{p}}\right)^{p}\in {L^{*}}^{p},
$$
and
$$
    \frac{\bar{\tau}b}{b^{e}}=\frac{\bar{\tau}\Phi(\Nr_{L/K}(x))}{\Phi(\Nr_{L/K}(x))^{e}}=
    \Nr_{L/K}\left(\frac{\bar{\tau}\Phi(x)}{\Phi(x)^{e}}\right)=
    \left(\Nr_{L/K}(x)^{\frac{1-e^{p-1}}{p}}\right)^{p}\in {L^{*}}^{p}.
$$
Thus, $M/\QQ$ is a Galois extension.
We extend $\bar{\tau}$ to $\kappa\in\Gal(M/\QQ)$ by
$$
    \kappa(\sqrt[p]{\omega})=\beta^{(1-e^{p-1})/p}\left({\sqrt[p]{\omega}}\right)^{e}, \ \ \ \ \
    \kappa(\sqrt[p]{b})=\Nr_{L/K}(x)^{(1-e^{p-1})/p}(\sqrt[p]{b})^{e}.
$$
Since $\kappa$ is an extension of $\tau$ to $M$, $\Gal(M/\QQ)$ is generated by $\tilde{\sigma}$,$\tilde{\eta}$,$\tilde{\lambda}$, and $\kappa$.
It is easy to verify that $\kappa$ is central and has order $p-1$
[while doing it, we take $\tau:\zeta_{p}\mapsto\zeta_{p}^{e}$.
Also, note that $\kappa$ is the unique extension of $\bar{\tau}$ of order $p-1$].
It follows that $\Gal(M/\QQ)\cong\HH_{p^{3}}\times C_{p-1}$ and $M^{\kappa}/\QQ$ is Galois with
$\Gal(M^{\kappa}/\QQ)\cong\HH_{p^3}$, as required.
\\
\indent
Clearly, $F(\alpha)\subseteq M^{\kappa}$. \
We have the short exact sequence
$$
\xymatrix{
    1 \ar@{->}[r] & \Gal(M/F) \ar@{->}[r]^{\imath} &
    \Gal(M/\QQ) \ar@{->}[r]^{\Res} & \Gal(F/\QQ) \ar@{->}[r] & 1,
}
$$
It follows that $\Gal(M/F)$ is generated by $\tilde{\eta}$,$\tilde{\lambda}$, and $\kappa$.
Thus, $F(\alpha)=M^{\kappa}$ if and only if $\tilde{\eta}\alpha\neq\alpha$ and $\tilde{\lambda}\alpha\neq\alpha$.
But $\tilde{\eta}\alpha=\alpha$ or $\tilde{\lambda}\alpha=\alpha$ imply linear dependence among the elements
$\sqrt[p]{\omega},\sqrt[p]{\omega}^{e},\ldots,\sqrt[p]{\omega}^{e^{p-2}}$ over $L$, a contradiction.
\\
\indent
To this end, $\Gal(F(\alpha)/\QQ(\alpha))\cong\Gal(F/\QQ)\cong C_{p}$,
and $\QQ(\alpha)/\QQ$ is not a normal extension.
Hence, $F(\alpha)$ is the splitting field for $\Irr(\alpha;\QQ)$.
\end{proof}
\begin{theorem}[The case $E=C_{p^{2}}\rtimes C_{p}$] \
\\
Let $x\in L^{*}$ such that $b=b(x)=\Phi(\zeta_{p}\Nr_{L/K}(x))\not\in {L^{*}}^{p}$.
Then $G=\Gal(L(\sqrt[p]{b})/K)$ \\ $\cong C_p\times C_p$,
generated by $\bar{\bar\sigma}$ and $\eta$, where,
$\bar{\bar\sigma}: \sqrt[p]{b}\mapsto\sqrt[p]{b}$, \
$\eta:\sqrt[p]{b}\mapsto\zeta_{p}\sqrt[p]{b}$, \ $\bar{\bar\sigma}|_L=\bar\sigma$, \ \ $\eta|_{L}=\id_{L}$.
Moreover, the (Brauer type) embedding problem given by $L(\sqrt[p]{b})/K$ and
$\pi:E\twoheadrightarrow G$ is solvable, where $\pi: u\mapsto\bar{\bar\sigma}$, $v\mapsto\eta$; \
a solution field is $M=L(\sqrt[p]{\omega},\sqrt[p]{b})$, \
$\omega=\Phi(\beta\sqrt[p]{a})$, \ $\beta=x^{p-1}\bar\sigma x^{p-2}\ldots\bar\sigma^{p-2}x$, \ $L=K(\sqrt[p]{a})$.
\\ \indent
Since $\omega\in\Phi(L^{*})$, $M/\QQ$ is a Galois extension with
$\Gal(M^{\kappa}/\QQ)\cong E$, where $\kappa$ is the extension of $\bar{\tau}$ to $M$, given by
$\kappa:\sqrt[p]{\omega}\mapsto{(\beta\sqrt[p]{a})}^{(1-e^{p-1})/p}\left({\sqrt[p]{\omega}}\right)^{e}$, and
$M^{\kappa}=F(\alpha)$, \ $\alpha=\sqrt[p]{\omega}+\kappa\sqrt[p]{\omega}+\ldots+\kappa^{p-2}\sqrt[p]{\omega}$.
Finally, $F(\alpha)$ is the splitting field of the minimal polynomial for
$\alpha$ over $\QQ$.
\end{theorem}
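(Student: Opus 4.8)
The plan is to follow the proof of Theorem 4.1 almost verbatim, tracking the two structural changes: the defining element is now $b=\Phi(\zeta_p\Nr_{L/K}(x))$ (carrying an extra factor $\zeta_p$ inside $\Phi$), and the radicand is $\omega=\Phi(\beta\sqrt[p]a)$ with $L=K(\sqrt[p]a)$ (carrying an extra factor $\sqrt[p]a$). I first fix the Kummer generator so that $\bar\sigma(\sqrt[p]a)=\zeta_p\sqrt[p]a$; this normalization is exactly what makes the extra $\zeta_p$ in $b$ correspond to the extra $\sqrt[p]a$ in $\omega$. Since $\zeta_p\Nr_{L/K}(x)\in K^*$, its image $b$ lies in $K^*$ and is fixed by $\bar\sigma$, so $L(\sqrt[p]b)/K$ is Galois and the argument that $G=\Gal(L(\sqrt[p]b)/K)\cong C_p\times C_p$ is identical to Theorem 4.1. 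For the step that $M/K$ is Galois I recompute $\bar{\bar\sigma}(\omega)/\omega$: using that $\Phi$ commutes with $\bar\sigma$ (as $\Gal(L/\QQ)$ is abelian) and the telescoping identity $\bar\sigma\beta/\beta=\Nr_{L/K}(x)/x^p$, the extra factor contributes $\bar\sigma(\sqrt[p]a)/\sqrt[p]a=\zeta_p$, so $\bar\sigma(\beta\sqrt[p]a)/(\beta\sqrt[p]a)=\zeta_p\Nr_{L/K}(x)/x^p$ and hence $\bar{\bar\sigma}(\omega)/\omega=\Phi(\zeta_p\Nr_{L/K}(x))/\Phi(x)^p=(\sqrt[p]b/\Phi(x))^p$, exactly as in (5). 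The verification that $M/L(\sqrt[p]b)$ is a genuine $C_p$-extension, i.e. $\omega\notin{L(\sqrt[p]b)^*}^p$, then goes through unchanged, since it uses only $b\notin{L^*}^p$, which is the hypothesis.

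The one genuinely new computation, and the crux of the theorem, is the order of $\tilde\sigma$. Setting $\tilde\sigma(\sqrt[p]\omega)=(\sqrt[p]b/\Phi(x))\sqrt[p]\omega$ as before, the same telescoping gives $\tilde\sigma^p(\sqrt[p]\omega)=(b/\Nr_{L/K}(\Phi(x)))\sqrt[p]\omega=(b/\Phi(\Nr_{L/K}(x)))\sqrt[p]\omega$. Whereas in the Heisenberg case this ratio was $1$, here $b/\Phi(\Nr_{L/K}(x))=\Phi(\zeta_p)$, and a direct evaluation yields $\Phi(\zeta_p)=\zeta_p^{(p-1)e^{p-2}}$, a \emph{primitive} $p$-th root of unity since $(p-1)e^{p-2}\not\equiv0\pmod p$. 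Thus $\tilde\sigma^p$ acts as multiplication by a nontrivial $p$-th root of unity on $\sqrt[p]\omega$ while fixing $L(\sqrt[p]b)$; that is, $\tilde\sigma^p=\tilde\lambda^{k}$ with $\tilde\lambda(\sqrt[p]\omega)=\zeta_p\sqrt[p]\omega$ the generator of $\Gal(M/L(\sqrt[p]b))$ and $k\not\equiv0\pmod p$. Consequently $\langle\tilde\lambda\rangle\subseteq\langle\tilde\sigma\rangle$ and $\tilde\sigma$ has order $p^2$, so $\Gal(M/K)=\langle\tilde\sigma,\tilde\eta\rangle$ is generated by an element of order $p^2$ and one of order $p$.

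To identify $\Gal(M/K)$ with $E=C_{p^2}\rtimes C_p$, I carry over the relations $\tilde\eta^p=\id$ and $\tilde\eta\tilde\sigma=\tilde\sigma\tilde\eta\tilde\lambda$ (verified on the generators $\sqrt[p]b,\sqrt[p]\omega$ exactly as in Theorem 4.1), together with $\langle\tilde\sigma\rangle\cap\langle\tilde\eta\rangle=1$ and $|\Gal(M/K)|=[M:K]=p^3$. Rewriting $\tilde\lambda$ as a power of $\tilde\sigma^p$, the commutation relation becomes $\tilde\eta\tilde\sigma\tilde\eta^{-1}=\tilde\sigma^{1+tp}$ for some $t\not\equiv0\pmod p$. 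The main obstacle here is purely bookkeeping: the computed exponent $t$ need not equal $1$, so the relation is not literally $vu=u^{p+1}v$. This is resolved by noting that for every $t\not\equiv0\pmod p$ the group $\langle u,v:u^{p^2}=v^p=1,\ vuv^{-1}=u^{1+tp}\rangle$ is isomorphic to the standard $C_{p^2}\rtimes C_p$ (replace $v$ by $v^{s}$ with $s\equiv t^{-1}\pmod p$, which has order $p$ and conjugates $u$ to $u^{1+p}$). Sending $\tilde\sigma\mapsto u$ and $\tilde\eta\mapsto v^s$ then yields the solution isomorphism $\varphi$ with $\pi\circ\varphi=\Res_{M/L(\sqrt[p]b)}$, since $\tilde\lambda$ generates the kernel $\Gal(M/L(\sqrt[p]b))$ and corresponds to the central subgroup $\langle u^p\rangle=\ker\pi$.

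The descent to $\QQ$ is formally identical to Theorem 4.1. Using the general identity $\bar\tau\Phi(y)/\Phi(y)^e=y^{1-e^{p-1}}$ for $y\in L^*$ (which follows by telescoping, since $\bar\tau^{p-1}=\id_L$), I obtain $\bar\tau\omega/\omega^e=(\beta\sqrt[p]a)^{1-e^{p-1}}$ and $\bar\tau b/b^e=(\zeta_p\Nr_{L/K}(x))^{1-e^{p-1}}$; both are $p$-th powers in $L^*$ because $p\mid(1-e^{p-1})$, so $M/\QQ$ is Galois and $\bar\tau$ extends to $\kappa$ as stated (with $\kappa(\sqrt[p]b)=(\zeta_p\Nr_{L/K}(x))^{(1-e^{p-1})/p}(\sqrt[p]b)^e$). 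It then remains to check, taking $\tau:\zeta_p\mapsto\zeta_p^e$, that $\kappa$ is central in $\Gal(M/\QQ)$ and has order $p-1$, giving $\Gal(M/\QQ)\cong E\times C_{p-1}$ and hence $\Gal(M^\kappa/\QQ)\cong E$. Finally, $F(\alpha)=M^\kappa$ follows by showing $\tilde\eta\alpha\neq\alpha$ and $\tilde\lambda\alpha\neq\alpha$ (otherwise one gets a linear dependence among $\sqrt[p]\omega,(\sqrt[p]\omega)^e,\ldots,(\sqrt[p]\omega)^{e^{p-2}}$ over $L$), and since $\QQ(\alpha)/\QQ$ is not normal, $F(\alpha)$ is the splitting field of $\Irr(\alpha;\QQ)$.
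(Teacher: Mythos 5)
Your proposal is correct and follows essentially the same route as the paper's proof, which likewise treats Theorem~4.2 as Theorem~4.1 with two tracked modifications: it normalizes $\bar\sigma(\sqrt[p]{a})=\zeta_p\sqrt[p]{a}$, recomputes $\bar{\bar\sigma}(\omega)/\omega=\Phi(\zeta_p\Nr_{L/K}(x))/\Phi(x)^p=\left(\sqrt[p]{b}/\Phi(x)\right)^p$, finds $\tilde\sigma^p(\sqrt[p]{\omega})=\Phi(\zeta_p)\sqrt[p]{\omega}=\zeta_p^{-e^{p-2}}\sqrt[p]{\omega}$ so that $\tilde\sigma$ has order $p^2$, and then declares the descent to $\QQ$ via $\kappa$ identical to the Heisenberg case. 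The one place you deviate is a point where you are in fact more careful than the paper: the paper flatly asserts $\tilde\lambda=\tilde\sigma^p$ and the literal relation $\tilde\eta\tilde\sigma=\tilde\sigma^{p+1}\tilde\eta$, which, since $e^{p-2}\equiv e^{-1}\pmod{p}$, holds as stated only when $e\equiv-1\pmod{p}$ (true in the $p=3$ computations of Section~5, false for a primitive root $e$ when $p>3$); in general $\tilde\sigma^{p}=\tilde\lambda^{-e^{-1}}$ and $\tilde\eta\tilde\sigma\tilde\eta^{-1}=\tilde\sigma^{1-ep}$, so your formulation --- $\tilde\sigma^p=\tilde\lambda^{k}$ with $k\not\equiv0\pmod{p}$, relation $vuv^{-1}=u^{1+tp}$, then a change of generators --- is the honest way to conclude $\Gal(M/K)\cong C_{p^2}\rtimes C_p$. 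One small point to tighten: replacing $v$ by $v^{s}$ gives $\pi\circ\varphi(\tilde\eta)=\eta^{s}$ rather than $\eta$, so strictly your $\varphi$ (and equally the paper's, when $e\not\equiv-1\pmod{p}$) satisfies the compatibility $\pi\circ\varphi=\Res_{M/L(\sqrt[p]{b})}$ only after composing $\pi$ with a suitable automorphism; if you want the literal condition, say explicitly that the embedding problem is solved for the correspondingly adjusted epimorphism --- a normalization the paper leaves implicit. Everything else --- Galoisness of $M/\QQ$ from $\bar\tau\omega/\omega^e$ and $\bar\tau b/b^e$ being $p$-th powers (your extra factor $\zeta_p^{1-e^{p-1}}=1$ is consistent with the paper's formula for $\kappa(\sqrt[p]{b})$), the deferred check that $\kappa$ is central of order $p-1$, and $F(\alpha)=M^{\kappa}$ via the linear-independence argument --- matches the paper's proof step for step.
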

\begin{proof}
A few changes should be made in the proof of Theorem 4.1. \\
We can assume $\bar{\sigma}:\sqrt[p]{a}\mapsto\zeta_{p}\sqrt[p]{a}$. Next, $\eta{\omega}=\omega$ and,
$$
\frac{\bar{\bar{\sigma}}(\omega)}{\omega}=
\frac{\bar{\sigma}(\Phi(\beta))}{\Phi(\beta)}\cdot\Phi\left(\frac{\bar{\sigma}\sqrt[p]{a}}{\sqrt[p]{a}}\right)=
\frac{\Phi(\zeta_{p}\Nr_{L/K}(x))}{\Phi(x)^{p}}=
\left(\frac{\sqrt[p]{b}}{\Phi(x)}\right)^{p}\in{L(\sqrt[p]{b})^{*}}^{p},
$$
thus, $M/K$ is a Galois extension.
We extend $\bar{\bar{\sigma}}$ and $\eta$ to
$\tilde{\sigma},\tilde{\eta}\in\Gal(M/K)$ as in (6).
$\Gal(M/K)$ is generated by $\tilde{\sigma},\tilde{\eta}$ and $\tilde{\lambda}$,
where $\tilde{\lambda}(\sqrt[p]{\omega})=\zeta_{p}\sqrt[p]{\omega}$ - a generator for
$\Gal(M/L(\sqrt[p]{b}))$.
Clearly, $\tilde{\eta}$ is of order $p$.
Now,
\small
$$
    \tilde{\sigma}^{p}(\sqrt[p]{\omega})=
    \frac{b}{\Phi(x)\bar{\sigma}(\Phi(x))\cdots\bar{\sigma}^{p-1}(\Phi(x))}\sqrt[p]{\omega}=
    \frac{b}{\Phi(\Nr_{L/K}(x))}\sqrt[p]{\omega}=\Phi(\zeta_{p})\sqrt[p]{\omega}=\zeta_{p}^{-e^{p-2}}\sqrt[p]{\omega}.
$$
\normalsize
Hence, $\tilde{\sigma}$ is of order $p^{2}$ and $\tilde{\lambda}=\tilde{\sigma}^{p}$.
Finally, the relation $\tilde{\eta}\tilde{\sigma}=\tilde{\sigma}^{p+1}\tilde{\eta}$ holds.
It follows that the group isomorphism $\varphi:\Gal(M/K)\cong C_{p^{2}}\rtimes C_{p}$,
sending $\tilde{\sigma}\mapsto u$, $\tilde{\eta}\mapsto v$, is a solution isomorphism
for the given embedding problem.
\small
$$
    \frac{\bar{\tau}\omega}{\omega^e}=
    \frac{\bar{\tau}(\Phi(\beta))}{\Phi(\beta)^{e}}\cdot
    \frac{\bar{\tau}(\Phi(\sqrt[p]{a}))}{\Phi(\sqrt[p]{a})^{e}}=
    \left((\beta\sqrt[p]{a})^{\frac{1-e^{p-1}}{p}}\right)^{p}\in {L^{*}}^{p} \ \ \ \ ,
\    \frac{\bar{\tau}b}{b^{e}}=
    \left(\Nr_{L/K}(x)^{\frac{1-e^{p-1}}{p}}\right)^{p}\in {L^{*}}^{p}.
$$
\normalsize
Thus, $M/\QQ$ is a Galois extension.
We extend $\bar{\tau}$ to $\kappa\in\Gal(M/\QQ)$ by
$$
    \kappa(\sqrt[p]{\omega})=(\sqrt[p]{a}\beta)^{(1-e^{p-1})/p}\left({\sqrt[p]{\omega}}\right)^{e}, \ \ \ \ \
    \kappa(\sqrt[p]{b})=\Nr_{L/K}(x)^{(1-e^{p-1})/p}(\sqrt[p]{b})^{e}.
$$
The rest remains exactly as in the previous case.
\end{proof}
We end this section with the following observations.
\begin{theorem}
(a) \ Suppose that $L=\QQ(\zeta_{p^{2}})$. Then $x\in L^{*}$ induces an $H_{p^{3}}$-extension if and only if it induces a $C_{p^{2}}\rtimes C_{p}$-extension (in the ways described in Theorem 4.1 and Theorem 4.2).
\\
(b) \ Suppose that $L\neq\QQ(\zeta_{p^{2}})$. If an element $x\in L^{*}$ does not induce an $H_{p^{3}}$-extension, it necessarily induces a $C_{p^{2}}\rtimes C_{p}$-extension.
\\
(c) \ There are infinitely many elements $x\in L^{*}$ which induce both an $H_{p^{3}}$-extension and a $C_{p^{2}}\rtimes C_{p}$-extension.
\end{theorem}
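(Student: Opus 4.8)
The plan is to reduce all three parts to a single computation, that of the class $\Phi(\zeta_p)$ in the $\FF_p$-vector space $V=L^{*}/{L^{*}}^{p}$. Here $\Phi$ descends to a homomorphism on $V$, and ``$z\in{L^{*}}^{p}$'' is the same as ``$z$ is trivial in $V$''. Writing $\gamma=\Nr_{L/K}(x)$, the criteria of Theorems 4.1 and 4.2 read: $x$ induces an $\HH_{p^{3}}$-extension iff $\Phi(\gamma)$ is nontrivial in $V$, and $x$ induces a $C_{p^{2}}\rtimes C_{p}$-extension iff $\Phi(\zeta_{p}\gamma)=\Phi(\zeta_{p})\Phi(\gamma)$ is nontrivial in $V$. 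Since $\Phi$ is multiplicative, the two classes $\Phi(\gamma)$ and $\Phi(\zeta_{p}\gamma)$ differ exactly by the fixed class $\Phi(\zeta_{p})$, so everything hinges on whether $\Phi(\zeta_{p})$ is trivial in $V$.

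First I would compute $\Phi(\zeta_{p})$ explicitly. Using $\bar{\tau}^{j}\zeta_{p}=\zeta_{p}^{e^{j}}$, each factor of the defining product equals $(\bar{\tau}^{j}\zeta_{p})^{e^{p-(j+2)}}=\zeta_{p}^{e^{j}e^{p-j-2}}=\zeta_{p}^{e^{p-2}}$, so $\Phi(\zeta_{p})=\zeta_{p}^{(p-1)e^{p-2}}=\zeta_{p}^{-e^{p-2}}$, exactly as already observed in the proof of Theorem 4.2. Because $e$ is a primitive root modulo $p$, the exponent $-e^{p-2}$ is prime to $p$; hence $\Phi(\zeta_{p})$ generates the same group $\mu_{p}$ as $\zeta_{p}$, and in particular $\Phi(\zeta_{p})\in{L^{*}}^{p}\iff\zeta_{p}\in{L^{*}}^{p}$.

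Next I would establish the key equivalence $\zeta_{p}\in{L^{*}}^{p}\iff L=\QQ(\zeta_{p^{2}})$. If $\zeta_{p^{2}}\in L$ then $\zeta_{p}=\zeta_{p^{2}}^{\,p}\in{L^{*}}^{p}$, and since $\QQ(\zeta_{p^{2}})$ and $L$ both have degree $p(p-1)$ over $\QQ$ this forces $L=\QQ(\zeta_{p^{2}})$. Conversely, if $\zeta_{p}=y^{p}$ with $y\in L$, then $y$ is a $p$-th root of $\zeta_{p}$, hence a primitive $p^{2}$-th root of unity, so $\zeta_{p^{2}}\in L$ and again $L=\QQ(\zeta_{p^{2}})$. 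Combining the two preparatory facts, $\Phi(\zeta_{p})$ is trivial in $V$ precisely when $L=\QQ(\zeta_{p^{2}})$.

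With this the three parts drop out. For \textbf{(a)}, when $L=\QQ(\zeta_{p^{2}})$ the class $\Phi(\zeta_{p})$ is trivial, so $\Phi(\zeta_{p}\gamma)$ and $\Phi(\gamma)$ coincide in $V$, giving the asserted ``if and only if''. For \textbf{(b)}, when $L\neq\QQ(\zeta_{p^{2}})$ the class $\Phi(\zeta_{p})$ is nontrivial; if $x$ does not induce an $\HH_{p^{3}}$-extension then $\Phi(\gamma)$ is trivial in $V$, whence $\Phi(\zeta_{p}\gamma)=\Phi(\zeta_{p})$ is nontrivial, so $x$ induces a $C_{p^{2}}\rtimes C_{p}$-extension. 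Part \textbf{(c)} is immediate from Corollary \ref{inf}, whose two conditions are exactly the $\HH_{p^{3}}$- and $C_{p^{2}}\rtimes C_{p}$-induction criteria. The only genuine content lies in the two preparatory facts; neither is difficult, the telescoping evaluation of $\Phi(\zeta_{p})$ being routine and the characterization of when $\zeta_{p}$ becomes a $p$-th power being a standard degree argument. The one point demanding care is the bookkeeping that $\gcd(e^{p-2},p)=1$, so that $\Phi(\zeta_{p})$ and $\zeta_{p}$ have the same image in $V$, after which (a) and (b) are a single remark about translating by a fixed class in an $\FF_{p}$-vector space.
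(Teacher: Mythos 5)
Your proposal is correct and takes essentially the same route as the paper: the paper's own (one-line) proof rests precisely on the identity $\Phi(\zeta_{p})=\zeta_{p}^{-e^{p-2}}$ for parts (a) and (b), and on Corollary 3.4 for part (c). Your write-up merely makes explicit the two details the paper leaves implicit --- that $-e^{p-2}$ is prime to $p$, so $\Phi(\zeta_{p})$ and $\zeta_{p}$ have the same class in $L^{*}/{L^{*}}^{p}$, and that $\zeta_{p}\in{L^{*}}^{p}$ if and only if $L=\QQ(\zeta_{p^{2}})$ (by the degree count $[L:\QQ]=p(p-1)$) --- both of which are verified correctly.
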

\begin{proof}
 (a) and (b) follow from $\Phi(\zeta_{p})=\zeta_{p}^{-e^{p-2}}$. (c) follows from Corollary \ref{inf}.
\end{proof}
\section{Polynomials for the non-abelian groups of order $27$}
Following Ledet, we shall construct $E$-extensions and their polynomials over $\QQ$, induced by the elements $x$ we found in Example 3.6 and Example 3.7.
We replace the primitive root $e=2$ with $e=-1$ -
the homomorphism $\Phi$ does not depend on $e$ when we consider it modulo $3$ (it may not be the case for higher primes).
Clearly, $\kappa\sqrt[3]{\omega}=1/\sqrt[3]{\omega}$, thus $\bar{\tau}\omega=1/\omega$, so $\omega+1/\omega\in F$.
Set $X=\sqrt[3]{\omega}+1/\sqrt[3]{\omega}$. Then $X^{3}-3X=\omega+1/\omega$.
Hence, if $p(X)\in\QQ[X]$ is the minimal polynomial for $\omega+1/\omega$ over $\QQ$, then the splitting field
of $p(X^3-3X)$ over $\QQ$ is $F(\sqrt[3]{\omega}+1/\sqrt[3]{\omega})$, i.e, $p(X^3-3X)$ is an $E$-polynomial over $\QQ$.
\begin{example}
\emph{
Let $x$ be as in Example 3.7. In order to get an $H_{27}$-polynomial over $\QQ$ induced by this $x$, consider
$\omega=\Phi(x^{2}\bar\sigma{x})$. \
Using mathematical software (Maple), we get the following $\HH_{27}$-polynomial over $\QQ$:
$$
p(X^3-3X)=(X^3-3X)^{3}-\frac{3^4}{7^2}(X^3-3X)^2-\frac{3\cdot 37}{7^3}(X^3-3X)+\frac{1489}{7^4}.
$$
}
\end{example}
\begin{example}
\emph{
Due to the presence of the radical $\sqrt[p]{a}$,
explicit extensions for the $C_{p^{2}}\rtimes C_{p}$ are more complicated to describe (unless $L=\QQ(\zeta_{p^2})$).
\\
\indent Let $x$ be as in Example 3.6.
In order to get a $C_{9}\rtimes C_{3}$-polynomial over $\QQ$ induced by this
$x$, consider $\omega=\Phi(x^{2}\bar\sigma{x}\cdot\sqrt[3]{a})$,
where $L=K(\sqrt[3]{a})$. Denote $\delta=\delta_{3}(7)$ and consider
the element
$$
\theta=3\delta^2+3\delta+3\zeta_{3}\delta+\zeta_{3}-4.
$$
The minimal polynomial for $\delta$ over $\QQ$ is $X^3+X^2-2X-1$,
and $\sigma\delta=\delta^2-2$. It follows that
$\bar{\sigma}\theta=\zeta_{3}\theta$. Therefore, we can take
$\sqrt[3]{a}=\theta$. Using mathematical software (Maple), we get
the following $C_{9}\rtimes C_{3}$-polynomial over $\QQ$: $$
p(X^3-3X)=(X^3-3X)^{3}-\frac{2\cdot 3^2\cdot
29}{13^2}(X^3-3X)^2-\frac{3\cdot 5\cdot
373}{13^3}(X^3-3X)+\frac{6791}{13^3\cdot 7}. $$
}
\end{example}
\section{Explicit Realizations with Exactly Two Ramified Primes}
If $N/k$ is an extension of number fields then $\Ram(N/k)$ denote the set of nonzero prime ideals of $k$ which are ramified in $N$.
The \emph{relative discriminant of} $N/k$ is the ideal $\delta_{N/k}$ of $\OO_{k}$ generated by the elements $\discr_{N/k}(\varepsilon_{1},\ldots,\varepsilon_{[N:k]})$, for all possible bases $\{\varepsilon_{1},\ldots,\varepsilon_{[N:k]}\}$ of $N/k$ such that each $\varepsilon_{i}\in\OO_{N}$.
A well known theorem of Dedekind [9, page 238] says that $P\in\Ram(N/k)$ if and only if $P$ divides $\delta_{N/k}$.
Suppose further that $N=k(t)$ for some $t\in\OO_{N}$. Then $\delta_{N/k}$ divides the principle ideal of $\OO_{k}$ generated by $\discr_{N/k}(1,t,\ldots,t^{[N:k]-1})=\prod_{i<j}(t_{i}-t_{j})^{2}$, where $t_{1}=t,t_{2},\ldots,t_{[N:k]}$ are the conjugates of $t$.
\begin{lemma}
Let $k$ be a number field, $\mu_{p}\subset k^{*}$, $c\in\OO_{k}$, $N/k=k(\sqrt[p]{c})/k$ a $C_p$-extension.
If a nonzero prime ideal $P$ of $\OO_k$ is ramified in $N$ then $P$ is lying above $p$ or $P$ divides $\OO_{k}c$.
\end{lemma}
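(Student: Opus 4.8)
The plan is to bound the relative discriminant $\delta_{N/k}$ by using the explicit generator $t=\sqrt[p]{c}$, and then to invoke Dedekind's theorem quoted just above the statement. First I would observe that $t$ is a root of the monic polynomial $X^{p}-c\in\OO_{k}[X]$, so $t\in\OO_{N}$; moreover, since $N/k$ is a $C_{p}$-extension we have $t\notin k$, and because $[N:k]=p$ the degree of $t$ over $k$ must equal $p$, so $X^{p}-c$ is in fact the minimal polynomial of $t$. Hence $\{1,t,\ldots,t^{p-1}\}$ is a $k$-basis of $N$ contained in $\OO_{N}$, and the divisibility recalled before the lemma applies: $\delta_{N/k}$ divides the principal ideal of $\OO_{k}$ generated by $\discr_{N/k}(1,t,\ldots,t^{p-1})=\prod_{i<j}(t_{i}-t_{j})^{2}$, i.e. by the discriminant of $X^{p}-c$.

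Next I would compute that discriminant. The conjugates of $t$ are $\zeta_{p}^{j}t$ for $j=0,\ldots,p-1$, so with $f=X^{p}-c$, $f'=pX^{p-1}$ and the standard formula $\discr(f)=(-1)^{p(p-1)/2}\prod_{j}f'(\alpha_{j})$ one finds $\prod_{j}f'(\zeta_{p}^{j}t)=p^{p}\bigl(\prod_{j}\zeta_{p}^{j}t\bigr)^{p-1}=p^{p}c^{p-1}$; here $\prod_{j}\zeta_{p}^{j}=\zeta_{p}^{p(p-1)/2}=1$ because $p$ is odd, and $t^{p}=c$. Thus $\discr_{N/k}(1,t,\ldots,t^{p-1})=\pm\,p^{p}c^{p-1}$, and therefore $\delta_{N/k}$ divides $(\OO_{k}p)^{p}(\OO_{k}c)^{p-1}$.

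Finally I would conclude by combining Dedekind's theorem with unique factorization of ideals in the Dedekind domain $\OO_{k}$. If $P$ is ramified in $N$ then $P\mid\delta_{N/k}$, hence $P\mid(\OO_{k}p)^{p}(\OO_{k}c)^{p-1}$; since $P$ is prime it must occur in the factorization of $\OO_{k}p$ or of $\OO_{k}c$, that is, $P$ lies above $p$ or $P$ divides $\OO_{k}c$, which is exactly the claim. I do not anticipate a genuine obstacle here: the only points requiring a little care are the discriminant computation and the verification that $X^{p}-c$ really is the minimal polynomial of $t$ (so that the quoted basis-discriminant bound is legitimately applicable); everything else is a direct application of the two facts on discriminants and ramification stated in the paragraph preceding the lemma.
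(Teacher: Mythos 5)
Your proposal is correct and follows essentially the same route as the paper: bound $\delta_{N/k}$ by $\discr_{N/k}(1,t,\ldots,t^{p-1})$ for $t=\sqrt[p]{c}$, show this discriminant equals $\pm p^{p}c^{p-1}$, and conclude via Dedekind's theorem. The only (immaterial) difference is that you evaluate the discriminant with the formula $(-1)^{p(p-1)/2}\prod_{j}f'(\zeta_{p}^{j}t)$ for $f=X^{p}-c$, while the paper expands $\prod_{i<j}(\zeta_{p}^{i}t-\zeta_{p}^{j}t)^{2}$ directly and reduces it to $f_{p}(1)^{2}$ times the discriminant of $\QQ(\zeta_{p})/\QQ$; you also make explicit two steps the paper leaves implicit, namely that $X^{p}-c$ is the minimal polynomial of $t$ and the final prime-factorization argument.
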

\begin{proof}
$t=\sqrt[p]{c}$ is a root of the polynomial $X^{p}-c\in\OO_{k}[X]$. Hence, $t$ is integral over $k$, thus $t\in\OO_{N}$.
The set of conjugates of $t$ in $N/k$ is $\{\zeta_{p}^{i}t\}_{i=0}^{p-1}$. Let $f_{p}(X)=X^{p-1}+\ldots+X+1\in\ZZ[X]$ (the $p^{\Th}$ cyclotomic polynomial).
We have
$$
\discr_{N/k}(1,t,\ldots,t^{p-1})=\prod_{i=0}^{p-1}\prod_{j=i+1}^{p-1}(\zeta_{p}^{i}t-\zeta_{p}^{j}t)^{2}=
$$
$$
t^{p(p-1)}\cdot\prod_{j=1}^{p-1}(1-\zeta_{p}^j)^2\cdot\prod_{i=1}^{p-1}\prod_{j=i+1}^{p-1}(\zeta_{p}^{i}-\zeta_{p}^{j})^2=
$$
$$
c^{p-1}\cdot f_{p}(1)^2\cdot\discr_{\QQ(\zeta_{p})/\QQ}(1,\zeta_{p},\ldots,\zeta_{p}^{p-2})=(-1)^{(p-1)/2}p^{p}c^{p-1}
$$
\end{proof}
\begin{theorem}
Suppose that $L=\QQ(\zeta_{p^2})$. Let $x\in\OO_{L}$, \ $\Nr_{L/\QQ}(x)=\pm p^{l_{1}}q^{l_{2}}$, \ $l_1\geq 0$, $l_2>0$, $q\neq p$ is a prime which splits completely in $L$, $I_{2}(x)=P^{l_{2}}$,
where $P$ is a prime ideal of $\OO_{K}$, $p$ does not divide $l_{2}$.
\\
1. \ Let $b=b(x)=\Phi(\Nr_{L/K}(x))$ and let $F(\alpha)/\QQ$ be the $\HH_{p^3}$-extension constructed as in Theorem 4.1. Then $\Ram(F(\alpha)/\QQ)=\{p,q\}$.
\\
2. \ Let $b=b(x)=\Phi(\zeta_{p}\Nr_{L/K}(x))$ and let $F(\alpha)/\QQ$ be the $C_{p^{2}}\rtimes C_{p}$-extension constructed as in Theorem 4.2, where $a=\zeta_{p}$. Then $\Ram(F(\alpha)/\QQ)=\{p,q\}$.
\end{theorem}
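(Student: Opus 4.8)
The plan is to sandwich $\Ram(F(\alpha)/\QQ)$ between $\{p,q\}$ from above, via the Kummer tower and the preceding Lemma, and from below, by exhibiting ramification at both $p$ and $q$. Throughout I write $M=L(\sqrt[p]{\omega},\sqrt[p]{b})$ for the solution field of Theorem 4.1 (resp. Theorem 4.2), so that $F(\alpha)=M^{\kappa}\subseteq M$ and hence $\Ram(F(\alpha)/\QQ)\subseteq\Ram(M/\QQ)$; it therefore suffices to control $\Ram(M/\QQ)$ and then to descend the ramification to $M^{\kappa}$.

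For the upper bound I would walk up the tower $\QQ\subseteq L\subseteq L(\sqrt[p]{b})\subseteq M$. The base $L=\QQ(\zeta_{p^2})$ is ramified over $\QQ$ only at $p$. For the two Kummer steps I would invoke the preceding Lemma (extended to a fractional generator $c$ by replacing $c$ with $cu^{p}$, $u\in L^{*}$, which changes neither the field nor its ramification): a prime ramifying in $L(\sqrt[p]{b})/L$ must lie above $p$ or divide $\OO_{L}b$, and likewise for $\sqrt[p]{\omega}$ over $L(\sqrt[p]{b})$. The crucial point is that $\OO_{L}b$ and $\OO_{L}\omega$ are supported only on primes above $p$ and $q$. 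Indeed $\OO_{L}x$ is supported above $p,q$ because $\Nr_{L/\QQ}(x)=\pm p^{l_{1}}q^{l_{2}}$; since $\widetilde{\Phi}_{L/F}$ and the Galois conjugations occurring in $\beta$ and in $\Nr_{L/K}(x)$ all preserve the rational prime lying below a given prime, the same support statement passes to $\OO_{K}\Nr_{L/K}(x)=I_{1}(x)P^{l_{2}}$, to $\OO_{K}b=\widetilde{\Phi}(I_{1}(x))\widetilde{\Phi}(P^{l_{2}})$, and to $\OO_{L}\omega=\widetilde{\Phi}_{L/F}(\OO_{L}\beta)$. This yields $\Ram(M/\QQ)\subseteq\{p,q\}$, whence $\Ram(F(\alpha)/\QQ)\subseteq\{p,q\}$.

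That $p$ ramifies is immediate: $F$ is the unique subfield of $L=\QQ(\zeta_{p^2})$ with $[F:\QQ]=p$, and $p$ is totally ramified in $\QQ(\zeta_{p^2})/\QQ$, hence totally ramified in $F$; as $F\subseteq F(\alpha)$, this forces $p\in\Ram(F(\alpha)/\QQ)$. The hard part, and the main obstacle, is to show that $q$ ramifies in $F(\alpha)$ itself and not merely in the large field $M$. First I would verify ramification in $M$: from $\widetilde{\Phi}(P^{l_{2}})=\prod_{j=0}^{p-2}(\tau^{j}P)^{l_{2}e^{p-(j+2)}}$, together with the transitivity of $\tau$ on the $p-1$ primes of $K$ above $q$, any prime $Q$ of $L$ above $q$ satisfies $v_{Q}(b)=l_{2}e^{p-(j+2)}\not\equiv 0\pmod p$ (using $p\nmid l_{2}$ and $p\nmid e$), so $L(\sqrt[p]{b})/L$, and a fortiori $M/L$, is ramified at $q$.

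To descend this to $F(\alpha)$ I would argue with the inertia group $I_{Q}\subseteq\Gal(M/\QQ)$ of a prime of $M$ above $q$. Because $q$ splits completely in $L$, the field $L$ lies in the inertia field $M^{I_{Q}}$, so $I_{Q}\subseteq\Gal(M/L)\subseteq\Gal(M/K)$; and by Theorem 4.1 (resp. Theorem 4.2) we have the internal direct product $\Gal(M/\QQ)=E\times\langle\kappa\rangle$ with $\Gal(M/K)=E$ and $\Gal(M/F(\alpha))=\langle\kappa\rangle$. Hence $I_{Q}$ lies in the factor $E$, which projects isomorphically onto $\Gal(F(\alpha)/\QQ)$; since $I_{Q}\neq 1$, its image --- the inertia of $q$ in $F(\alpha)/\QQ$ --- is nontrivial, so $q\in\Ram(F(\alpha)/\QQ)$. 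Combining the three bounds gives $\Ram(F(\alpha)/\QQ)=\{p,q\}$. For part (2) the only changes are $b=\Phi(\zeta_{p}\Nr_{L/K}(x))$ and $\omega=\Phi(\beta\sqrt[p]{a})$ with $a=\zeta_{p}$; since $\zeta_{p}$ and $\sqrt[p]{a}=\zeta_{p^{2}}$ are units, the ideals $\OO_{K}b$ and $\OO_{L}\omega$ are unchanged, and all three steps apply verbatim.
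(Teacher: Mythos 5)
Your proof is correct, and it splits into a half that coincides with the paper's argument and a half that takes a genuinely different route. The upper bound is the same: the paper also walks up the tower $\QQ\subseteq L\subseteq L(\sqrt[p]{b})\subseteq M$, applies Lemma 6.1 at each Kummer step, and uses the fact that $b$ and $\omega$ are products of conjugates of $x$ to conclude that any ramified rational prime other than $p$ must be $q$ (your side remark about extending the lemma to fractional generators is unnecessary here, since $x\in\OO_L$ makes $b$ and $\omega$ integral). The divergence is in the lower bound at $q$: the paper never shows directly that $q$ ramifies in $F(\alpha)$. Instead it appeals to the Kronecker--Weber theorem to assert $|\Ram(F(\alpha)/\QQ)|\geq 2$ --- i.e., the nontrivial fact that a nonabelian $p^3$-extension of $\QQ$ cannot be ramified at a single prime --- and then concludes $\Ram(F(\alpha)/\QQ)=\{p,q\}$ by elimination from the containment $\Ram\subseteq\{p,q\}$ together with $p\in\Ram$. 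You instead prove $q\in\Ram(F(\alpha)/\QQ)$ directly: the valuation computation $v_{Q}(b)=l_{2}e^{p-(j+2)}\not\equiv 0\pmod{p}$ (which is where the hypotheses $I_{2}(x)=P^{l_{2}}$, $p\nmid l_{2}$, and the transitivity of $\tau$ on the primes of $K$ above $q$ enter quantitatively) shows $q$ ramifies in $L(\sqrt[p]{b})/L$, hence in $M$; and the inertia-group descent, using $I_{Q}\subseteq\Gal(M/L)\subseteq\Gal(M/K)$ (since $q$ splits completely in $L$) together with the internal decomposition $\Gal(M/\QQ)=\Gal(M/K)\times\langle\kappa\rangle$ from Theorem 4.1/4.2 and the injectivity of restriction to $\Gal(F(\alpha)/\QQ)$ on the factor $\Gal(M/K)$, pushes the ramification down to $F(\alpha)=M^{\kappa}$. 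What each approach buys: the paper's argument is shorter but leans on an external minimal-ramification fact stated tersely as ``Kronecker--Weber''; yours is longer but self-contained, makes the role of the arithmetic hypotheses on $I_2(x)$ visible in the ramification itself, and would generalize to situations where the elimination argument is unavailable (e.g., if the upper-bound set contained more than two primes). Both handle part (2) identically, by observing that $\zeta_p$ and $\sqrt[p]{a}=\zeta_{p^2}$ are units so no ideal support changes.
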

\begin{proof}
We prove only case 1. The proof for the other group is similar (since we choose the radical $\sqrt[p]{a}$ in Theorem 4.2 to be a unit in $\OO_{L}$).
\\ \indent
By Kronecker-Weber Theorem, $\Ram(F(\alpha)/\QQ)\geq 2$. Also, $p\in\Ram(F(\alpha)/\QQ)$ because $p$ is ramified in $F$.
Let $q'\neq p$ be a rational prime which is ramified in $F(\alpha)$. We shell see that $q'=q$. Let $P'$ be a prime in $L$ lying above $q'$. $P'$ is ramified in $M=L(\sqrt[p]{\omega},\sqrt[p]{b})$, where
$$
    \omega=\Phi(\beta), \ \ \ \beta=x^{p-1}\bar\sigma x^{p-2}\ldots\bar\sigma^{p-2}x.
$$
\indent
If $P'$ is ramified in $L(\sqrt[p]{b})$ then apply Lemma 6.1 with $k=L$, $c=b$ and conclude that $P'$ divides $\OO_{L}b$.
Since $b$ is a product of conjugates of $x$ it follows that $P'$ is above $q$, so $q'=q$.
Assume that $P'$ is unramified in $L(\sqrt[p]{b})$. Let $Q'$ be a prime in $L(\sqrt[p]{b})$ lying above $P'$. So $Q'$ is ramified in $M$. Apply Lemma 6.1 with $k=L(\sqrt[p]{b})$, $c=\omega$ and conclude that  $Q'$ divides $\OO_{L(\sqrt[p]{b})}\omega$. But $\omega$ is also a product of conjugates of $x$. Hence $q'=q$.
\end{proof}
In Particular:
\begin{corollary}
For every odd prime $p$, any non-abelian group of order $p^{3}$ can be realized as a Galois group over $\QQ$ with exactly two ramified primes.
\end{corollary}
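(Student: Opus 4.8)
The plan is to deduce the corollary directly from the preceding theorem, so that the whole task reduces to exhibiting, for each odd prime $p$, a single element $x\in\OO_{L}$ meeting that theorem's hypotheses. First I would fix $L=\QQ(\zeta_{p^{2}})$. Since $\Gal(\QQ(\zeta_{p^{2}})/\QQ)\cong(\ZZ/p^{2}\ZZ)^{*}$ is cyclic of order $p(p-1)$, it contains a unique subextension $K=\QQ(\zeta_{p})$ of degree $p-1$ and a unique $C_{p}$-subextension $F/\QQ$, so the standing setup of the paper applies verbatim with this choice of $F$. The two non-abelian groups of order $p^{3}$ are exactly $\HH_{p^{3}}$ and $C_{p^{2}}\rtimes C_{p}$, as recalled in Section 4, and the preceding theorem treats both of them in its cases 1 and 2; hence it suffices to feed one suitable $x$ into that theorem.

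Next I would produce such an $x$ exactly as in the proof of Corollary \ref{inf}. By the Chebotarev Density Theorem there are infinitely many rational primes that split completely in the Hilbert class field $\mathcal{H}(L)$; I choose one, call it $q$, with $q\neq p$. Any prime ideal $Q$ of $L$ lying above $q$ is then principal, say $Q=\OO_{L}x$, and $\Nr_{L/\QQ}(x)=q$. In the notation of the theorem this means $l_{1}=0$ and $l_{2}=1$, so in particular $p\nmid l_{2}$. Because $q$ splits completely in $L$ it splits completely in $K$, and the residue degree of $Q$ over the prime $P$ of $K$ beneath it equals $1$; therefore $I_{2}(x)=\Nr_{L/K}(Q)=P=P^{l_{2}}$. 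Thus $x$ satisfies every hypothesis of the theorem.

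Finally I would invoke the two cases of the theorem in turn. Taking $b=\Phi(\Nr_{L/K}(x))$ and applying case 1 yields an $\HH_{p^{3}}$-extension $F(\alpha)/\QQ$ with $\Ram(F(\alpha)/\QQ)=\{p,q\}$; taking $b=\Phi(\zeta_{p}\Nr_{L/K}(x))$ with $a=\zeta_{p}$ and applying case 2 yields a $C_{p^{2}}\rtimes C_{p}$-extension $F(\alpha)/\QQ$, again with $\Ram(F(\alpha)/\QQ)=\{p,q\}$. Each has exactly two ramified primes, and together they exhaust the non-abelian groups of order $p^{3}$, which proves the corollary.

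I expect this to be essentially a matter of assembly, with no serious obstacle. The one point that must be verified with care is that the Chebotarev-produced $x$ really meets the rigid shape $I_{2}(x)=P^{l_{2}}$ with $p\nmid l_{2}$ demanded by the theorem. Choosing $q$ to split completely in $\mathcal{H}(L)$ forces $l_{2}=1$, which simultaneously guarantees principality of $Q$ (so that $x$ exists at all) and makes the divisibility condition $p\nmid l_{2}$ automatic, so this potential difficulty evaporates.
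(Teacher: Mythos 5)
Your proposal is correct and matches the paper's intended argument exactly: the paper presents this corollary as an immediate consequence ("In Particular") of Theorem 6.2, with the requisite element $x$ supplied by the same Chebotarev/Hilbert-class-field device used in Corollary 3.4 (a principal prime $Q=\OO_{L}x$ above a prime $q$ splitting completely in $\mathcal{H}(L)$, giving $l_{1}=0$, $l_{2}=1$, $I_{2}(x)=P$). Your care about the shape $I_{2}(x)=P^{l_{2}}$ with $p\nmid l_{2}$ is well placed and resolves just as you say.
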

\begin{remark}
    Any subfield of the non-abelian $p^3$-extensions $F(\alpha)/\QQ$ in Theorem 6.2 does not admit Scholz conditions.
\end{remark}
\begin{example}
\emph{
$L=\QQ(\zeta_{9})$, $K=\QQ(\zeta_3)$. $L/K$ is a $C_3$-extension generated by $\bar\sigma:\zeta_{9}\mapsto\zeta_{9}^{4}$.
$F/\QQ$ -- The $C_3$-extension inside $L$.
$\Gal(L/F)$ $(\cong\Gal(K/\QQ))$ generated by $\bar\tau:\zeta_{9}\mapsto\zeta_{9}^{-1}$.
$x=\zeta_{9}+2$.
$\Nr_{L/\QQ}(x)=19$, and $19$ splits completely in $L$.
In the definition of $\Phi$, we take $e=2$ (and not $e=-1$ as in section 5).
$\omega=\Phi(x^2\bar\sigma x)$.
$F(\alpha)/\QQ$ is an $\HH_{27}$-extension, where $\alpha=\sqrt[3]{w}+\beta^{-1}{\sqrt[3]{\omega}}^{2}$, \ $\beta=x^2\bar\sigma x$.
By Theorem 6.2, $\Ram(F(\alpha)/\QQ)=\{3,19\}$.
The minimal polynomial of $\alpha$ over $\QQ$ is
\\
$\Irr(\alpha;\QQ)=X^9$
\vskip 0.1cm
\ \ \ \ \ \ \ \ \ \ $-3^4\cdot 13\cdot X^7$
\vskip 0.1cm
\ \ \ \ \ \ \ \ \ \
$-3^5\cdot 5\cdot 59\cdot X^6$
\vskip 0.1cm
\ \ \ \ \ \ \ \ \ \
$+3^5\cdot 19\cdot 47\cdot X^5$
\vskip 0.1cm
\ \ \ \ \ \ \ \ \ \
$+2\cdot 3^8\cdot 11\cdot 19\cdot X^4$
\vskip 0.1cm
\ \ \ \ \ \ \ \ \ \
$+2^2\cdot 3^5\cdot 19\cdot 139\cdot X^3$
\vskip 0.1cm
\ \ \ \ \ \ \ \ \ \
$-3^7\cdot 5\cdot 19^3\cdot X^2$
\vskip 0.1cm
\ \ \ \ \ \ \ \ \ \
$-3^8\cdot 7\cdot 19^3\cdot X$
\vskip 0.1cm
\ \ \ \ \ \ \ \ \ \
$-3^6\cdot 19^3\cdot 73$
}
\end{example}
\section{Acknowledgment}
The author is grateful to Jack Sonn, Moshe Roitman and Arne Ledet for useful discussions.
%
%

%
%
\vskip 1cm
\small
\noindent
Oz Ben-Shimol \\
Department of Mathematics \\
University of Haifa \\
Mount Carmel 31905, Haifa, Israel \\
E-mail Address: obenshim@math.haifa.ac.il
\end{document}